\documentclass[a4paper,12pt]{amsart}

\usepackage{boxedminipage}
\usepackage[dvips]{graphics}
\usepackage{amsmath,amsfonts,amssymb,amsthm}
\usepackage{epsfig, fullpage}
\usepackage{xspace}
\usepackage[dvips]{color}
\usepackage{multicol}
\usepackage{wrapfig}
\usepackage[table]{xcolor}

\thispagestyle{empty}

\newtheorem{theorem}{Theorem}[section]
\newtheorem{proposition}[theorem]{Proposition}
\newtheorem{lemma}[theorem]{Lemma}
\theoremstyle{definition}

\newtheorem{example}[theorem]{Example}

\newcommand{\mat}{\textsc{Matlab}\xspace}
\newcommand{\supp}{\operatorname{supp}}

\def\N{\mathbb N}
\def\Z{\mathbb Z}
\def\R{\mathbb R}

\begin{document}
\title{Preconditioners based on Windowed Fourier Frames Applied to Elliptic Partial Differential Equations}
\author{Samir K. Bhowmik$^1$ and Christiaan C. Stolk$^2$}
\address{KdV Institute for Mathematics, University of Amsterdam, Amsterdam, Netherlands.
$1.$ S.K.Bhowmik@uva.nl,  $2.$ C.C.Stolk@uva.nl}

\thanks{This research was funded by the Netherlands Organisation for
Scientific Research through VIDI grant 639.032.509.}
\keywords{Windowed Fourier frame; symbol; elliptic PDE; preconditioner}

\subjclass[2010]{Primary}
\date{\today}

\begin{abstract}
We investigate the application of windowed Fourier frames (WFFs) to
the numerical solution of partial differential equations, focussing on
elliptic equations.  The action of a partial differential operator
(PDO) on a windowed plane wave is close to a multiplication, where the
multiplication factor is given by the symbol of the PDO evaluated at
the wave number and central position of the windowed plane wave.  This
can be exploited in a preconditioning method for use in iterative
inversion. For domains with periodic boundary conditions we find that
the condition number with the preconditioning becomes bounded and the
iteration converges well. For problems with a Dirichlet boundary condition,
some large and small singular values remain. However the iterative inversion
still appears to converge well.
\end{abstract}
\maketitle

\section{Introduction}
Localization in the position-wave number space is an important concept
in partial differential and other operator equations. The large class
of pseudodifferential operators acts approximately local in the
position-wave number space.
Prominent examples where this is put to use in numerics are wavelets and
multigrid methods, methods that are closely related.
Multigrid and wavelet approaches are very successful
for elliptic problems, and have been applied also elsewhere. For wavelet
concepts we refer to \cite{Cohen2003}, the multigrid literature is very
large, see e.g.\ the book \cite{TrottenbergOosterleeSchueller2001}.

Wavelets provide a decomposition in scale and position. However,
they have little resolution in direction, i.e.\ the direction in the
wave-number space. More recently, a frame of functions
\cite{Mallat2009} called curvelets has been proposed
as a tool for numerical analysis of PDEs
\cite{CandesEtAl2006}. Curvelets provide additional localization
in direction. The localization in direction is better and better for the
smaller scales by a so called parabolic scaling: A fourfold smaller
scale leads to twice better resolution in direction and twice better
resolution in position. A method of solving a pseudodifferential
equation using curvelets, including a curvelet based approximation for
the operator inverse suitable for use as a preconditioner was
introducted in \cite{HerrmannMoghaddamStolk2008}.
Approximations
of pseudodifferential operators were discussed in \cite{Demanet:1113369}.

In this paper we analyze windowed Fourier frames, also referred to as
Gabor frames. They provide a decomposition of the position-wave number
space (``phase space'') into rectangular blocks. Compared to curvelets
it is a simpler decomposition, and easier to implement. Compared to
wavelets it still offers better directional resolution, although at
somewhat higher (log-linear) cost. Windowed Fourier frames are also
simpler than curvelets in the sense that they are generated by
translations and modulations of a single window function, while for
curvelets there is no such set of transformations that exactly maps
the one to the other.

Chan et. al \cite{RHCTFC01} has considered elliptic problems and
proposed circulant preconditioners to solve the resulting system of
equations using iterative techniques, for example, the conjugate
gradient method.  They prove that such preconditioners could be chosen
to reduce the condition number from $\mathcal{O}(n^2)$ to
$\mathcal{O}(n)$, where $n$ grid points have been chosen to discretize
the problem for a second order elliptic problem.
Some popular preconditioning techniques to solve linear systems using
iterative methods have been discussed in detail in \cite{KeC01},
\cite[Chap. 1]{Cohen2003}, \cite[Chap. 10]{JGVL001},
\cite[Chap. 7]{CDM001} and references there in. These techniques
includes use of positive definite matrices, incomplete $LU$ and
cholesky factorizations, multilevel, multigrid, wavelet
preconditioners and so on.  It has been noticed that condition numbers
are in control with most preconditioners and have slower growth
compared with the unpreconditioned system.

We believe that, while multigrid and wavelets are very important as
preconditioning methods, other possibilities should also be studied.
The particular phase space localization of wavelets is
well suited for elliptic equations, but not for other types of PDE,
for example the Helmholtz equation. Windowed Fourier frames, curvelets
or maybe other transforms are natural candidates to
study in more general settings than the elliptic problem.

Our purpose is therefore to introduce a preconditioning method based
on windowed Fourier frames, and to establish that it performs well for
certain discrete PDE's. We start with straightforward elliptic
problems, and include an example where the different phase space
localization properties provide an advantage for windowed Fourier
preconditioning.  For this study we consider a symmetric second order
elliptic BVP with a finite and a periodic domains. We first discretize
the PDE using a standard finite difference scheme. We use a
preconditioner based on windowed Fourier frames (WFFs) and the symbol
of the operator while solving the discrete PDE using
iterative linear solvers, to speed up the convergence.

The article is organized in the following way. We start by introducing
the preconditioners in Section~\ref{sec:precon_construct}. We study
boundedness and invertibility of a symmetrically preconditioned
operator in Section~\ref{f:boundednessofsolutions}.  Some numerical
test results are presented in Section~\ref{Section4:fff}. We finish this
study in Section~\ref{conclusions001} with some concluding remarks.

\section{Preconditioners based on the symbol of the operator and a
windowed Fourier frame}\label{sec:precon_construct}
In this section, we focus on introducing and defining the preconditioner
based on the symbol of the partial differential operator and a windowed
Fourier frame. As a model problem we consider the boundary value problem
\begin{equation}\label{f:ellipticbvp01_a}
  \mathcal{L} u = f \quad \text{in}\quad \Omega ,
\end{equation}
with Dirichlet and periodic boundary conditions on $\partial\Omega$ where
\[
  \mathcal{L} u =-\nabla\cdot (a(x) \nabla u)+ b(x) u(x) ,
\]
for given real coefficients $a(x)$, and $b(x)$.
Here we consider $\Omega
\subset \mathbb{R}^d$, an open bounded domain,
$f:\Omega\longrightarrow \mathbb{R}$ is a given function and
$u:\Omega\cup \partial \Omega \longrightarrow \mathbb{R}$ is an
unknown function. A detailed description of this type of problems can
be found in \cite{NHA001, LCE, YPJR}, and many references therein.
We investigate both periodic and non-periodic boundary value problems.

We make the following assumptions on the coefficients, to ensure
that $\mathcal{L}$ is boundedly invertible. We assume there is some
constant $C$ such that $a(x) \ge C$. In the case of Dirichlet boundary
conditions we consider $b \ge 0$, in the case of periodic boundary
conditions we assume that $b \ge C_0 > 0$ for some constant $C_0$.

\subsection*{Windowed Fourier frames}
We first give a short introduction of windowed Fourier frames (WFF).
Let $\mathbf{H}$  be a Hilbert space.
A sequence $\{\psi_n\}_{n\in\Gamma}$ is a frame~\cite[Section 5.1.1, Definition 5.1]{Mallat2009} of \textbf{H} if there exist two constants $A>0$, $B>0$ such that for any $f \in \mathbf{H}$,
\begin{equation}\label{f:fram_condition001a}
 A\|f\|^2 \le \sum_{n\in\Gamma}|\langle f, \psi_n \rangle|^2 \le B\|f\|^2.
\end{equation}
The index set $\Gamma$ might be finite or infinite and one can define a frame operator $F_1$ so that
\[
  F_1f[n] = \langle f, \psi_n   \rangle , \qquad \text{for all} \quad n\in\Gamma.
\]
If the condition  (\ref{f:fram_condition001a}) is satisfied then $F_1$ is called a frame operator. When $A=B$ the frame is said to be tight~\cite[page 155]{Mallat2009}, \cite{OCH}.

A window function is simply a function in $C_0^\infty(\R^d)$, i.e.\
it is smooth, and zero outside some chosen finite domain.
Let us focus on $d=1$, and consider a real symmetric ($ g(t)=g(-t)$,
for all $t\in \mathbb{R}$), non-negative
window function.  We can translate $g$ by $v\in \R$ and modulate $g$ by frequency $\xi\in \R$ as
$
 g_{v, \xi}(t)= e^{i\xi t} g(t-v),
$
which are known as windowed Fourier atoms or Gabor atoms.
Here $\|g_{v,\xi}(t)\| = 1$ for any $v\in \R$, $\xi \in \R$ since $\|g\|=1$.
If the functions $g_{v, \xi}(x)$ satisfy the frame condition (\ref{f:fram_condition001a}), then they are called windowed Fourier frames (WFF)~\cite[Section 5.4]{Mallat2009} and
for any
$f \in \mathbf{L}^2(\mathbb{R})$, the operator $F$ 
  defined by
\begin{equation}\label{f:wff_eqn001}
 Ff(v, \xi) = \langle f, g_{v, \xi}\rangle = \int_{\R} f(t) g(t-v) e^{-i\xi t} dt,
\end{equation}
is called the windowed Fourier frame operator. In
Figure~\ref{windowfunctions001:f} we present a sample windowed
Fourier frame.
\begin{figure}[ht]
\begin{center}
\includegraphics[height=4cm]%
  {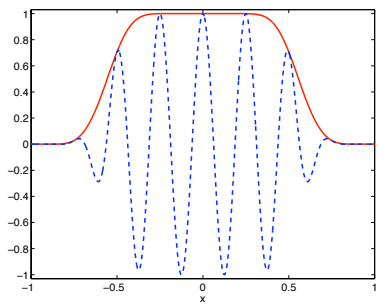}
\end{center}
\caption{%
Example of a function from a windowed Fourier frame in one dimension
with window function
$w(x)=\sin\left( \frac{\pi}{2}\frac{e^{-c/x}}{e^{-c/x}+e^{-c/(1-x)}} \right)$, $c=1.5$.%
%
}
\label{windowfunctions001:f}
\end{figure}

A version of the windowed Fourier frame transformation with discrete
index set $\Gamma$, is obtained by restricting $(v, \xi)$ to a rectangular
grid with interval size $v_0$ and $\xi_0$ in time and frequency
respectively and define~\cite[page 182]{Mallat2009}
\[
 g_{n, k} = g(t-nv_0) e^{i  \xi_k t},
\]
where $\xi_k = k \xi_0$, which will be needed in
Section~\ref{Section4:fff} while implementing the preconditioners
for elliptic PDE's. It is well understood \cite{ID01,Mallat2009}
that the windowed Fourier family $\{g_{n, k}\}_{(n, k) \in \Z^2}$ is a
frame only if $\frac{2 \pi}{v_0 \xi_0} \ge 1$ and the frame bounds $A$
and $B$ satisfies $A\le \frac{2 \pi}{v_0 \xi_0} \le B$.
Let $g$ be a window function with support
$[-\frac{\pi}{\xi_0}, \frac{\pi}{\xi_0}]$.
Then $\left\{g_{n, k}\right\}_{(n, k)\in \Z^2}$ is a tight frame~\cite{Mallat2009} with a frame bound equal to $A$, if
\begin{equation}\label{findbound0a0:f}
 \frac{2\pi}{\xi_0}\sum_{n=-\infty}^{\infty} |g(t-nv_0)|^2 = A >0
\end{equation}
for all $t\in \R$.

In the discrete setting, we replace the Fourier basis
$\{e^{i k \xi_0  t}\}_{k\in\Z}$ of $L^2[-\pi/\xi_0, \pi/\xi_0]$ by the discrete
Fourier basis $\{e^{\frac{i 2 \pi k n}{N_l}}\}_{0 \le k< N_l}$ of
$\mathbb{C}^{N_l}$ to construct a discrete windowed Fourier frame.
Let us consider $g[n]$ be a $N$ periodic discrete window function with
a support and restricted to $[-N/2, N/2]$ that is included in
$[-N_l/2, N_l/2-1]$.  Then following ~\cite{Mallat2009} one can see
that if $\mod(N, M)=0$ and
\[
 N_l \sum_{m=0}^{\frac{N}{M}-1} |g[n-mN]|^2 =A>0 \quad \forall \quad 0\le n<N
\]
then $\{g_{m, k}[n]= g[n-mM]e^{i 2\pi kn/N_l}\}_{0\le k <N_l, 0\le m <K}$ is a tight frame in $\mathbb{C}^N$ with frame bound equal to $A$ where
$K=\frac{N}{M}$.
For a fixed window position indexed by $m$, the discrete windowed Fourier frame coefficients are
\[
Ff[m, k] = \langle f, g_{m, k}\rangle = \sum_{n=0}^{N-1} f[n]g[n-mM]e^{-\frac{i2\pi kn}{N_l}},
\]
for all $0\le k< N_l$.

We set the length of the window equal to $2 v_0$, so that two successive
windows have overlapping support and we choose tight windowed Fourier frames.
So we choose the parameters such that
$\frac{2\pi}{\xi_0} = 2 v_0$, and $g$ has support in $[-v_0,v_0]$.

Several choices for the window  function are discussed in
Appendix~\ref{f:appendix}. Our initial criterion for a good window function
is the decay of its Fourier transform.
We conclude that both the windows $h_5(x)$ 
with $c=1.5$, $d=0.9$ and the
window formed by $h_4(x)$ are very well behaved. We
experimented with both the windows  to define the preconditioners and recorded
the number of iterations needed for convergence while solving the
precondition linear system. The numbers for these two choices of the window
function are approximately equal, preconditioners formed by using the
stretched window $h_5(x)$ with  $c=1.5$, $d=0.9$  take a few iterations
less than that of $h_4(x)$, but the effect is small. We decide to use the
window $h_5(x)$ for this study since we designed it
(and it performs a little better).

In order to apply windowed Fourier frames to the BVP
\eqref{f:ellipticbvp01_a} we need to define and organise window
functions for a multi-dimensional and bounded domain $\Omega \subset \R^d$,
say $\bar{\Omega}=[a, b]^d$, where the domain can be both periodic and
non-periodic. Multidimensional window functions are defined
straightforwardly using a tensor product approach. For periodic
domains we assume that the period is a discrete multiple of $v_0$, say $Kv_0$.
Then the set of coefficients becomes periodic with period $K$.
In Section~\ref{f:boundednessofsolutions} we will discuss this further.
For non-periodic problems, we define
$K+1$ windows  in each of the coordinate directions (instead of $K$ windows) with support $\frac{4\pi}{K}$ (in each of the coordinate directions), considering  an extended domain $[a-\frac{2\pi}{K}, b+\frac{2\pi}{K}]^d \supset \Omega$,
by
\[
 g_j(x) =\left\{
\begin{array}{cc}
 g(x-jv_0) & x \in \Omega,\\
 0      & x\notin\Omega,
\end{array}
\right.
\]
where $j \in \{1, 2, \cdots, (K+1)^d\}$. Then we have $2^d(K+1)$,
($d>1$) windows that cross the boundary of $\Omega$.

\subsection*{Preconditioners}
Here we return to our main discussion. First we give a short motivation why
a PDO can be approximated by WFF's and the symbol of the PDO.
For simplicity we start with BVP (\ref{f:ellipticbvp01_a}) in one dimension.
Setting
\[
  u = g_{j, k}(x) = e^{i\xi_k x} g(x-jv_0), \quad \text{$x\in \Omega\subset \R$},
\]
$j\in J$ (the set $J$ is defined in Section~\ref{f:boundednessofsolutions}), $k\in \Z$,
we get
\begin{align*}
  a(x) \frac{\partial^2 u}{\partial x^2}
  = {}& a(x)\frac{\partial}{\partial x} \left[i \xi_k e^{i \xi_k x}
    g(x-jv_0)+e^{i \xi_k x} \frac{d }{dx}g(x-jv_0)\right]
\\
  = {}& a(x)\left(-\xi_k^2 e^{i \xi_k x} g(x-jv_0)+ (1+i \xi_k) e^{i \xi_k x}
    \frac{d }{dx}g(x-jv_0)+ e^{i \xi_k x} \frac{d^2}{dx^2} g(x-jv_0)\right)
\\
  = {}& A_1\qquad+\qquad B\qquad+\qquad C,
\end{align*}
where $A_1$ is the leading part (when $\xi_k$ is very large) of the
elliptic operator acting on (\ref{f:ellipticbvp01_a}).  When $\xi_k$
is very large, $\xi_k^2$ is the dominating term in $a(x)\frac{d^2}{dx^2}$
and so the differential operator $a(x)\frac{d^2}{dx^2}$ can be approximated
by a multiplication operator $a(x) \xi_k^2$.
Now let us represent a function $u(x)\in \mathbf{H}$ using windowed
Fourier frames
\[
 u(x)=\sum_{j, k} C_{j, k} g_{j, k} (x),\quad \text{$x\in \Omega$}
\]
where
\[
 C_{j, k} = \langle u(x), g_{j, k}(x) \rangle
\quad \text{and}\quad
g_{j, k}(x) = e^{i \xi_k x} g(x-jv_0).
\]
Then we can approximate
\[
a(x)\frac{\partial^2 u}{\partial x^2} \approx \sum a(x) \xi_k^2 C_{j, k} g_{j, k}(x)
= \sum a(x) \xi_k^2 \langle u, g_{j, k}\rangle g_{j, k}.
\]
Here we observe that the PDO ($a(x)\frac{\partial^2 u}{\partial x^2}$)
can be approximately reconstructed by the tight windowed Fourier
frames and its duals multiplied by the symbol of the PDO if $a$ varies
little over the support of $g_{j,k}$. This result motivates us to define
preconditioners based on symbols of the PDO's and WFF's.

It is to note that we consider $\bar x_j$, $j\in J$, as midpoints of the subdomains of $\Omega$.
To bound the condition number and to speed up the convergence of
iterative solvers we are interested in defining preconditioners for
a discrete equivalent of (\ref{f:ellipticbvp01_a}) based on the symbol
and WFFs.  To this end, we define an invertible matrix $M$
\[
  M_{\tilde{j},\tilde{k};j,k}
  = \delta_{\tilde{j},j} \delta_{\tilde{k},k} (1+a(\bar x_j) \xi_k^2 ).
\]
We solve an equivalent system (of $\mathcal{A}u=f$, in which $\mathcal {A}$
is a symmetric discrete equivalent of the operator acting on
(\ref{f:ellipticbvp01_a}) using a finite difference scheme).
Several forms of preconditioning can be distinguished:
\begin{description}
 \item[Symmetric Preconditioning]
$
 P \mathcal{A} P \tilde u = P f$,
coupled with
 $ P \tilde u = u$,
where $P=F^* M^{-\frac{1}{2}} F$.
\item[Left Preconditioning]
$
 P \mathcal{A}  u = P f$,
where $P=F^* M^{-1} F$.
\item[Right Preconditioning]
$
 \mathcal{A} P \tilde u = f$, coupled with $u=P \tilde u$
where $P=F^* M^{-1} F$.
\end{description}
In each case $F^*$ is the conjugate transpose of the frame operator $F$.
The relation between the left and right preconditioners is established
in the following result.
\begin{theorem}
The singular values of the left preconditioned matrix $P\mathcal{A}$ and
the  right preconditioned matrix $\mathcal{A}P$ are equal where
$P=F^* M^{-1} F$.
\end{theorem}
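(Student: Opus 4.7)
The plan is to exploit two structural facts: the matrix $\mathcal{A}$ is symmetric (stated explicitly when the preconditioning schemes are introduced), and the preconditioner $P = F^* M^{-1} F$ is Hermitian. Once both are in hand, $P\mathcal{A}$ and $\mathcal{A}P$ are conjugate transposes of each other, and the equality of their singular values is immediate.

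First I would verify that $P$ is Hermitian. From its definition, $M$ is a diagonal matrix with diagonal entries $1 + a(\bar x_j)\xi_k^2$, which are real since $a$ is a real coefficient and the $\xi_k$ are real frequencies. Hence $M^{-1}$ is diagonal and real, in particular Hermitian, and
\[
 P^* = (F^* M^{-1} F)^* = F^* (M^{-1})^* F = F^* M^{-1} F = P .
\]
Using $\mathcal{A}^* = \mathcal{A}$ as well, one then computes
\[
 (P\mathcal{A})^* = \mathcal{A}^* P^* = \mathcal{A} P ,
\]
so the left and right preconditioned matrices are related by conjugate transposition.

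Finally I would invoke the standard fact that a matrix and its conjugate transpose share the same singular values: if $P\mathcal{A} = U\Sigma V^*$ is a singular value decomposition, then $\mathcal{A}P = (P\mathcal{A})^* = V\Sigma U^*$ is a singular value decomposition with the same diagonal matrix $\Sigma$. This completes the argument. Equivalently, one can observe that $(P\mathcal{A})^*(P\mathcal{A}) = \mathcal{A}P^2\mathcal{A} = (\mathcal{A}P)(\mathcal{A}P)^*$, and since a matrix $X X^*$ and $X^*X$ have identical nonzero eigenvalues, the singular values of $P\mathcal{A}$ and $\mathcal{A}P$ coincide (with the zero singular values accounted for by equal dimensions).

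There is really no genuine obstacle here; the only thing to check carefully is that both symmetry assumptions are actually available. The symmetry of $\mathcal{A}$ is assumed in the problem statement, and the Hermitian character of $P$ hinges on $M$ being real and diagonal, which follows directly from its explicit construction above. If one later wanted to generalize to a non-symmetric $\mathcal{A}$ or a non-Hermitian $M$, this clean identification of $\mathcal{A}P$ with $(P\mathcal{A})^*$ would fail and the statement would need to be revisited.
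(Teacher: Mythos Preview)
Your proof is correct and follows essentially the same approach as the paper: show $P^*=P$ and $\mathcal{A}^*=\mathcal{A}$, conclude $(P\mathcal{A})^*=\mathcal{A}P$, and use that a matrix and its adjoint share singular values. Your version is in fact more careful, since you spell out why $M^{-1}$ is Hermitian and give the SVD justification for the final step, whereas the paper leaves both implicit.
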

\begin{proof}
 We have
\[
 (P\mathcal{A})^*=\mathcal{A}^* P^*= \mathcal{A}P,
\]
since $\mathcal{A}$ is symmetric and
\[
 P^*=(F^* M^{-1} F)^*=(M^{-1} F)^*F=F^* M^{-1} F,
\]
proves the claim since singular values of $P\mathcal{A}$ and
$(P\mathcal{A})^*$ are the same.
\end{proof}

%

\section{Boundedness and invertibility of the symmetrically preconditioned
operator $P\mathcal{L}P$}\label{f:boundednessofsolutions}
A key property of a preconditioner, is that the condition number of
the preconditioned operator is bounded independent of the
discretisation. We will show that the symmetrically preconditioned
operator $P\mathcal{L}P$, where $P=F^* M^{-1/2} F$ and $F$ stands for
tight windowed Fourier frame operator, considered in the continuous
case, defines a continuous map, with continuous inverse on $L^2(\Omega)$.
Under a suitable discretisation this can be used to derive the first
mentioned property, although we will not do so (instead we study some
discrete systems numerically in the next section). We prove this in
the case of the domain
\[
 \Omega = [0, 2\pi)^n = \mathbb{T}^n
\]
i.e.\ $[0, 2\pi]^n$  with the periodic boundary conditions. In the case
of a domain with a boundary we will see in the section on numerics that there
are a few singular values that grow if the discretisation parameter becomes
small, so that the result appears to be false.

\begin{theorem}\label{f:theorem001}
The self adjoint operator $P\mathcal{L}P:L_2(\Omega) \rightarrow L_2(\Omega)$
is boundedly  invertible, that is, there exists $c_2 \ge c_1>0$ such that
\[
  c_1 \|u\|^2 \le \langle P\mathcal{L}P u, u \rangle \le c_2 \|u\|^2,
    \quad \text{for all}\quad u\in H.
\]
\end{theorem}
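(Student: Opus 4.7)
The plan is to verify $\langle P\mathcal{L}Pu,u\rangle \asymp \|u\|^2$ by exploiting self-adjointness of $P$: since $P=P^*$, the form rewrites as $\langle\mathcal{L}Pu,Pu\rangle = \int_\Omega(a|\nabla Pu|^2 + b|Pu|^2)\,dx$, and because $a,b$ are bounded above and below by positive constants on $\Omega=\mathbb{T}^n$, this reduces the theorem to establishing $c_1\|u\|_{L^2}^2 \leq \|Pu\|_{H^1}^2 \leq c_2\|u\|_{L^2}^2$.

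For the upper bound I would expand
\[
Pu = \sum_{j,k}(1+a(\bar x_j)\xi_k^2)^{-1/2}\langle u,g_{j,k}\rangle\, g_{j,k},
\]
observe that $\|Pu\|_{L^2}\leq\|u\|_{L^2}$ since $M\geq I$ and $F^*F=I$ (tight frame with $A=1$), and then control $\|\nabla Pu\|_{L^2}$ via the product rule $\nabla g_{j,k}(x) = i\xi_k g_{j,k}(x) + e^{i\xi_k x}(\nabla g)(x-\bar x_j)$. The first piece is a Gabor synthesis against $g$ with coefficients satisfying
\[
\sum_{j,k}\frac{\xi_k^2}{1+a(\bar x_j)\xi_k^2}\bigl|\langle u,g_{j,k}\rangle\bigr|^2 \leq \frac{1}{C}\|u\|^2,
\]
by $a\geq C$ and $F^*F=I$. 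The second piece is a Gabor synthesis against $\nabla g\in C_0^\infty$, which is a bounded operator on $L^2$. Combining these gives $\|\nabla Pu\|_{L^2}\leq c\|u\|_{L^2}$, so the upper bound is done.

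The lower bound is the main obstacle. The underlying principle, foreshadowed in the motivating calculation in Section~\ref{sec:precon_construct}, is that $F^*MF$ acts as a Gabor quantisation of the symbol $1+a(x)\xi^2$ and therefore coincides with $I+\mathcal{L}$ modulo a bounded remainder. I would make this precise by computing
\[
\langle F^*MFu,u\rangle = \sum_{j,k}(1+a(\bar x_j)\xi_k^2)\bigl|\langle u,g_{j,k}\rangle\bigr|^2
\]
and showing, via integration by parts together with the localisation of $g$ around $\bar x_j$, that this is equivalent to $\|u\|^2 + \langle\mathcal{L}u,u\rangle$. The key technical input is a commutator-type estimate controlling the mismatch between $a(\bar x_j)$ and $a(x)$ over $\supp g(\cdot-\bar x_j)$, which requires smoothness of $a$.

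With $F^*MF\asymp I+\mathcal{L}$ in hand, the lower bound on $P\mathcal{L}P$ follows by arguing that $P$ is spectrally comparable to $(I+\mathcal{L})^{-1/2}$, so that $P\mathcal{L}P$ is comparable to $\mathcal{L}(I+\mathcal{L})^{-1}$; this last operator is bounded above by $I$ and bounded below by $\frac{C_0}{1+C_0}\cdot I$, since $\mathcal{L}\geq C_0 I$ under the periodic-case assumption $b\geq C_0>0$. The most delicate point is that $P=F^*M^{-1/2}F$ is not literally the inverse square root of $F^*MF$ --- the projection $FF^*$ onto $\mathrm{range}(F)$ interferes --- so closing this final gap requires a careful argument exploiting the range structure of $F$ on the periodic domain.
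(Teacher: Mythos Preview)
Your reduction is the same as the paper's: since $\langle P\mathcal{L}Pu,u\rangle=\langle\mathcal{L}Pu,Pu\rangle\asymp\|Pu\|_{H^1}^2$ under the standing coefficient assumptions, everything hinges on $\|Pu\|_{H^1}\asymp\|u\|_{L^2}$. Your upper bound argument via the product rule on $g_{j,k}$ and the two Gabor syntheses (against $g$ and against $\nabla g$) is correct and in fact more elementary than what the paper does there.

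The lower bound, however, has a real gap --- precisely the one you flag at the end. Knowing $F^*MF\asymp I+\mathcal{L}$ as quadratic forms does \emph{not} yield $F^*M^{-1/2}F\asymp(I+\mathcal{L})^{-1/2}$: because the frame is redundant, $FF^*\ne I$, so $(F^*MF)^{-1}\ne F^*M^{-1}F$ and there is no functional-calculus passage from the multiplier $M$ to $M^{-1/2}$ at the level of the synthesised operators. Your proposed ``careful argument exploiting the range structure of $F$'' would have to supply exactly the missing ingredient, and nothing in the proposal indicates how.

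The paper circumvents this obstruction by a two-step mechanism. First, it works at the $H^{-1/2}\!\to\!H^{1/2}$ level, where the sesquilinear form $\langle Pu,v\rangle=\langle M^{-1/2}Fu,Fv\rangle$ can be controlled \emph{directly} (no $FF^*$ appears): a windowing norm-equivalence $\|Wu\|_{H^{1/2}(\Omega,\mathbb{C}^{\tilde K})}\asymp\|u\|_{H^{1/2}(\Omega)}$ is proved via a commutator estimate for $[E_{(1)},g_j]$, and Lax--Milgram then gives bounded invertibility of $P:H^{-1/2}\to H^{1/2}$. Second, the paper shows $P$ is a periodic elliptic pseudodifferential operator of order $-1$; the parametrix gives Fredholmness $L^2\to H^1$, and elliptic regularity forces any kernel element to be smooth, hence visible already at the $H^{\pm1/2}$ level where invertibility is known --- so the kernel is trivial and $P:L^2\to H^1$ is boundedly invertible. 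The PsDO machinery is what lets one move between Sobolev scales and thereby escape the frame-redundancy trap your approach runs into.
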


By assumption $\mathcal{L}: H^{1}\rightarrow H^{-1}$ is boundedly invertible.
Therefore, to establish Theorem~\ref{f:theorem001}, it is sufficient to show
that $P$ is boundedly invertible from $L_2(\Omega)$ to $H^1(\Omega)$,
and that $P$ is boundedly invertible from $H^{-1}(\Omega)$ to $L_2(\Omega)$.
In fact it sufficient to show that $P$ is boundedly invertible from
$L_2(\Omega)$ to $H^1(\Omega)$, as this implies that $P^*$ is boundedly
invertible from $H^{-1}(\Omega)$ to $L_2(\Omega)$ and $P = P^*$.

One may think of $\mathbb{T}^n$ as the hypercube $[0, 2\pi[^n
\subset \R^n$ or $[-\pi, \pi [^n$. Functions on $\mathbb{T}^n$
may be thought as those functions on $\R^n$ that are $2 \pi$
periodic in each of the coordinate directions.
Let $p \in \R$. The sobolev space $H^p(\Omega)$  is the space of all
functions $\psi \in L^2(\Omega)$  that satisfy
\begin{equation}\label{f:defn002a}
 \|\psi\|_p^2=\sum_{m\in \Z^n} (1+|m|^2)^p |a_m|^2< \infty
\end{equation}
for the Fourier coefficients $a_m$ of $\psi$. The space $H^p(\Omega)$ is
a Hilbert space with the scalar product defined by
\begin{equation}\label{f:defn002a_a}
 \langle \phi, \psi \rangle_p := \sum_{m\in \Z^n} (1+|m|^2)^p a_m \bar b_m
\end{equation}
for $\phi$, $\psi \in H^p(\Omega)$ with Fourier coefficients $a_m$ and
$b_m$, respectively.

When we consider one window only, the windowed Fourier frame operator
$F$ becomes the Fourier transform operator. So we first prove the
invertibility of the operator $P\mathcal{L}P$ considering $F$ as the
Fourier transform operator.
From here we denote  $M_{\frac{1}{2}} =M^{-\frac{1}{2}}$. We first show that
$Pu\in {H^1(\Omega)}$ for $u\in L_2(\Omega)$. Here
\[
  \langle Pu, Pu\rangle
  = \left\langle F^{-1} M_{\frac{1}{2}} Fu, F^{-1}M_{\frac{1}{2}} Fu\right\rangle
  = \left\langle M_{\frac{1}{2}} Fu,M_{\frac{1}{2}} Fu\right\rangle
\]
where $Fu\in \ell_2(\Z)$ is the sequence of Fourier coefficients of
$u\in L_2(\Omega)$.
Now using (\ref{f:defn002a_a}) we have
\begin{eqnarray*}
\langle Pu, Pu\rangle_{1} & = & \sum_{m\in \Z^n} (1+|m|^2)
(M_{\frac{1}{2}})^2 |(Fu)_m|^2,
\end{eqnarray*}
where $(Fu)_m\in \ell_2(\Z)$ are the Fourier coefficients of
$u\in L_2(\Omega)$.
Now for any $m\in\Z^n$ there exists $0<\tilde A< \tilde B$ such that
$
 \tilde A \le (1+|m|^2)(M_{\frac{1}{2}})^2  \le \tilde B.
$
So  one gets
$$
 \tilde A \|u\|^2 \le \|Pu\|_1 \le \tilde B \|u\|^2,
$$
and
$$ Pu = F^{-1}M_{\frac{1}{2}}Fu \in H^1(\Omega),\quad \text{ {if}
$u\in L_2(\Omega)$}.
$$
Thus there exists $0<c_1 \le c_2$ such that
\[
 c_1\|u\|^2 \le \langle P\mathcal{L}Pu, u \rangle \le c_2 \|u\|^2,
\]
and that confirms the boundedly invertibility of
$P \mathcal{L} P : L_2(\Omega) \rightarrow L_2(\Omega)$, since
$\mathcal{L}$ is boundedly invertible and $P^* = P$.

Next we investigate the boundedly invertibility of
$P \mathcal{L} P : L_2(\Omega) \rightarrow L_2(\Omega)$ considering $F$
as a windowed Fourier frame operator. To define window functions we
consider $K$ uniform subintervals of size $\frac{2\pi}{K}$ in each of
the coordinate directions so that we can divide $\Omega$
($=\mathbb{T}^n$) into $K^n$ subdomains. Then we define $K^n$
overlapping subdomains with length $\frac{4\pi}{K}$ on each of the
coordinate directions and denote them by $D_j\subset\Omega$ for all
$j\in J$ where $J=\{1, 2, \cdots, \tilde K\}$ with $\tilde K=K^n$. We
consider $K$ windows on each of the coordinate directions.  We define
window functions $g_j(x)$ on $\Omega$ for all $j\in J$ satisfying
$\sum_{j\in J} g^2_j(x)=1$; where $x\in\Omega$.  The support of each $g_j(x)$
is contained in $D_j$. By $x_j$ we denote the midpoint of the block
$D_j$. Of course for the periodic setting one needs to
consider appropriate boundary domains, in one dimension for example,
$D_{1}=2\pi \left[0, \frac{1}{K}\right]\cup 2\pi \left[\frac{K-1}{K},
1\right)$. We will also assume that the support of $g_j$ stays
away some small distance from the boundaries.  The windowing
operator is denoted by $W$, defined by
\begin{align}
  W : L^2(\Omega) \longrightarrow {}&
    L^2(D_1)\times L^2(D_2)\times\cdots\times L^2(D_{\tilde{K}})
\\ \label{f:win_funct01}
 Wu= {}& \left(g_1 u, g_2 u, \cdots, g_{\tilde{K}} u \right).
\end{align}
($Wu$ can also be viewed as an element of the larger space
$L^2(\Omega, C^{\tilde{K}})$.)
The adjoint windowing operator $W^*$ then equals
\[
  W^*(u_1, u_2, u_3, \cdots, u_{\tilde{K}})= \sum_{j\in J} g_j u_j.
\]
The Fourier transform of the windowed Fourier transform is done on each
domain $D_j$. We will write $\mathcal{F}_{D_j}$ for this Fourier transform
on functions restricted to $D_j$.  We redefine the operator $P$ as
\[
  P = \sum_{j\in J} g_j \left(
        \mathcal{F}_{D_j}^{-1} M_{\frac{1}{2}} \mathcal{F}_{D_j}\right)g_j .
\]

We first study $P$ as an operator $H^{1/2} \rightarrow H^{-1/2}$, starting
with a result about $W$.
\begin{lemma}\label{f:lemma_inv_p}
Let $W$ be defined by (\ref{f:win_funct01}). Then there exists
$0 < \alpha < \beta$ such that
\begin{equation}\label{f:eq001a}
  \alpha \|Wu\|_{H^{1/2}(\Omega, C^{\tilde{K}})}
    \le \|u\|_{H^{1/2}(\Omega, C)}
    \le \beta \|Wu\|_{H^{1/2}(\Omega, C^{\tilde{K}})} .
\end{equation}
\end{lemma}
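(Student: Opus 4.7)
My plan is to derive both bounds in (\ref{f:eq001a}) from a single ingredient, namely that multiplication by each smooth window $g_j$ is a bounded operator on $H^{1/2}(\mathbb{T}^n)$, combined with the partition-of-unity identity $\sum_{j \in J} g_j^2 = 1$ already built into the construction of the windows.

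For the right-hand inequality I would write $u = \sum_j g_j \cdot (g_j u)$ using the partition of unity, then apply the triangle inequality in $H^{1/2}$ and invoke the multiplier bound $\|g_j v\|_{H^{1/2}} \le C \|v\|_{H^{1/2}}$ on each summand. A final Cauchy--Schwarz inequality over the $\tilde K$ summands converts the $\ell^1$ sum into the $\ell^2$ norm $\|Wu\|_{H^{1/2}(\Omega, C^{\tilde K})}$, producing a constant of the shape $\beta = C\sqrt{\tilde K}$. For the left-hand inequality I would apply the same multiplier bound to each component $g_j u$ individually and sum the squared norms, so that $\|Wu\|_{H^{1/2}(\Omega, C^{\tilde K})}^2 = \sum_j \|g_j u\|_{H^{1/2}}^2 \le C^2 \tilde K \,\|u\|_{H^{1/2}}^2$, yielding $\alpha = (C\sqrt{\tilde K})^{-1}$.

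The only nontrivial step is the multiplier estimate itself, which I expect to be the main obstacle: at the fractional exponent $s=1/2$ it is not a direct consequence of the Leibniz rule. I would establish it using the Fourier-series definition of $H^{1/2}(\mathbb{T}^n)$ given in (\ref{f:defn002a}). Multiplication by the smooth periodic function $g_j$ acts on Fourier coefficients as convolution with the rapidly decaying sequence $(\hat g_j)_m$, and a Schur/Young-type bound on this convolution---exploiting the Peetre-type inequality $(1+|m|^2)^{1/2} \le 2\,(1+|m'|^2)^{1/2}(1+|m-m'|^2)^{1/2}$ and the rapid decay of $\hat g_j$---delivers the desired $H^{1/2}$ bound. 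Equivalently, one may quote that multiplication by a $C^\infty$ function is a pseudodifferential operator of order zero, and hence bounded on $H^s$ for every real $s$. Once this multiplier estimate is in hand, the rest of the proof is bookkeeping around the partition of unity.
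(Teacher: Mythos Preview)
Your argument is correct, and in fact your treatment of the right-hand inequality $\|u\|_{H^{1/2}} \le \beta \|Wu\|_{H^{1/2}}$ is more elementary than the one in the paper. The paper proves the left inequality exactly as you do (boundedness of multiplication by each $g_j$ on $H^{1/2}$), but for the right inequality it takes a different route: it introduces the operators $E_{(s)}$ with symbol $(1+|\xi|^2)^{s/2}$, expands $\|E_{(1/2)}Wu\|_{L^2}^2$, and shows that the cross terms assemble into a sum involving the commutators $[E_{(1)}, g_j]$, which are pseudodifferential operators of order~$0$ and hence contribute only an $L^2$-controlled error. Balancing this error against the exact identity $\|Wu\|_{L^2}=\|u\|_{L^2}$ yields the bound. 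Your approach sidesteps the commutator computation entirely by observing that $u = W^*Wu = \sum_j g_j(g_j u)$ and applying the very same multiplier bound once more, together with Cauchy--Schwarz in $j$. Both proofs ultimately rest on the fact that multiplication by a $C^\infty(\mathbb{T}^n)$ function is bounded on $H^{1/2}$, but yours uses nothing else, whereas the paper additionally invokes the order calculus for commutators. The paper's route has the minor advantage of making the dependence of the constant on the commutator norm explicit; yours has the advantage of brevity and of avoiding any pseudodifferential machinery beyond the zeroth-order multiplier fact you isolate as the ``only nontrivial step.'' Your proposed justification of that multiplier estimate (convolution on Fourier coefficients plus the Peetre inequality, or alternatively the order-zero PsDO viewpoint) is standard and sufficient.
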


\begin{proof}
We consider separately the two inequalities
\begin{equation}\label{f:eq001b}
\alpha \|W u\|_{H^{1/2}(\Omega,C^{\tilde{K}})} \le \|u\|_{H^{1/2}(\Omega,C)}
\end{equation}
and
\begin{equation}\label{f:eq001c}
\|u\|_{H^{1/2}(\Omega,C)} \le \beta \|W u\|_{H^{1/2}(\Omega,C^{\tilde{K}})}.
\end{equation}
Equation (\ref{f:eq001b}) is easy, it follows directly from the
continuity of the multiplication by $g_j$ on $H^{1/2}$.
Equation (\ref{f:eq001c}) is more difficult. We solve it as follows.
Define $E_{(s)}$ to be the pseudodifferential on $\Omega$ ($=\mathbb{T}^n$,
the torus) with symbol $(1 + \|\xi\|^2)^{s/2}$.
This defines a self adjoint operator.
The $H^s$ norm of a function $u$ is
equivalent to $ \|E_{(s)} u\|_{L_2}.$
In this proof we in fact use this formula for the norm.

We start with the basic estimate
\begin{equation}\label{f:eq001d}
 \|Wu\|^2_{H^{1/2}} \ge \frac{1}{2} \|Wu\|^2_{L_2} +\frac{1}{C} \|E_{(1/2)}Wu\|^2
\end{equation}
where $C \ge 2$ is a constant to be chosen later. For $\|Wu\|^2_{L_2}$ we
find the following
\[
  \|u\|^2_{L^2}
  = \langle u, \sum_j g_j^2 u  \rangle
  = \sum_j  \langle g_j u, g_j u  \rangle
  = \|Wu\|_{L_2}^2.
\]
For
$
 \|E_{(1/2)}Wu\|^2
$
we use the following
\begin{align*}
\|E_{(1/2)} Wu\|^2_{L_2}
  = {}& \sum_j \langle E_{(1/2)} g_j u, E_{(1/2)} g_j u\rangle
\\
  = {}& \langle E_{(1/2)} u, E_{(1/2)} \sum_j g_j^2  u\rangle
    + \sum_j \langle u,[ E_{(1)}, g_j] g_j u\rangle,
\\
  = {}& \| u \|^2_{H^{1/2}}
    + \sum_j \langle u,[ E_{(1)}, g_j] g_j u\rangle,
\end{align*}
where we used that  $E_{(1/2)} E_{(1/2)}=E_{(1)}$ and
$ [E_{(1)}, g_j]=  E_{(1)} g_j - g_j E_{(1)}.$
The operator $[E_{(1)}, g_j]$ is a pseudodifferential operator of order
$0$ (follows from \cite{SAPG}).  Therefore we find that there is a
constant $D$ such that
\[
\left|\sum_j \langle u,[ E_{(1)}, g_j] g_j u\rangle\right| \le D \|u\|_{L_2}^2.
\]
We now use what we just described in (\ref{f:eq001d}) and we choose $C=\min(2D, 2)$. This yields
\begin{eqnarray*}
\|Wu\|^2_{H^{1/2}} &\ge& \frac{1}{2} \|u\|^2_{L_2}  +\frac{1}{C} \left(\|E_{(1/2)}u\|^2_{L_2}-D\|u\|_{L_2}^2 \right)\\
& \ge & \frac{1}{C} \|E_{(1/2)}u\|^2_{L_2} = \frac{1}{C} \|u\|^2_{H^{1/2}}.
\end{eqnarray*}
We therefore have proved (\ref{f:eq001c}).
\end{proof}

Using the lemma we can prove the following
\begin{proposition} \label{prop:P_Hhalf_inv}
$P$ is boundedly invertible $H^{-1/2}(\Omega) \rightarrow H^{1/2}(\Omega)$.
\end{proposition}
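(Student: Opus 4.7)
The plan is to factor $P = W^{\ast} Q W$, where $W$ is the windowing operator from Lemma~\ref{f:lemma_inv_p} and $Q$ is the block-diagonal operator whose $j$th block is the Fourier multiplier $\mathcal{F}_{D_j}^{-1} M_{\frac12} \mathcal{F}_{D_j}$ on functions supported in $D_j$. By the definition of $M_{\frac12}$, the $j$th block has symbol $(1 + a(\bar{x}_j)|\xi|^2)^{-1/2}$, and by the ellipticity assumption $a \ge C > 0$ together with $a \in L^\infty$, this symbol is equivalent, uniformly in $j$, to $(1 + |\xi|^2)^{-1/2}$. Consequently $Q$ is a boundedly invertible map $H^{-1/2}(\Omega, \mathbb{C}^{\tilde{K}}) \to H^{1/2}(\Omega, \mathbb{C}^{\tilde{K}})$, with the additional feature that its quadratic form is equivalent to $\|\cdot\|_{H^{-1/2}}^2$ on the product space.

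Boundedness of $P : H^{-1/2}(\Omega) \to H^{1/2}(\Omega)$ then follows by composition: multiplication by each smooth window $g_j$ is bounded on every Sobolev scale, so $W$ is bounded $H^{-1/2}(\Omega) \to H^{-1/2}(\Omega,\mathbb{C}^{\tilde{K}})$ and, dually, $W^{\ast}$ is bounded $H^{1/2}(\Omega,\mathbb{C}^{\tilde{K}}) \to H^{1/2}(\Omega)$.

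For the lower bound, I would argue via the quadratic form. Since $P$ is self adjoint,
\[
\langle P u, u\rangle \;=\; \langle Q W u,\, W u \rangle \;\ge\; c\,\|W u\|_{H^{-1/2}(\Omega,\mathbb{C}^{\tilde{K}})}^2 .
\]
The partition of unity identity $\sum_j g_j^2 \equiv 1$ gives $W^{\ast} W = I$ on $L^2(\Omega)$, and by density and continuity of $W$ and $W^{\ast}$ on the $H^{-1/2}$ scale the identity persists there. Hence
\[
\|u\|_{H^{-1/2}(\Omega)} \;=\; \|W^{\ast} W u\|_{H^{-1/2}(\Omega)} \;\le\; \|W^{\ast}\|\cdot\|W u\|_{H^{-1/2}(\Omega,\mathbb{C}^{\tilde{K}})},
\]
which combined with the previous display yields $\langle P u, u\rangle \ge c' \|u\|_{H^{-1/2}}^2$. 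Together with the boundedness already established, $(u,v) \mapsto \langle P u, v \rangle$ is a continuous, symmetric, coercive form on $H^{-1/2}(\Omega)$, and Lax-Milgram (equivalently, the Riesz representation applied to the equivalent inner product $\langle P u, v\rangle$) delivers the boundedly invertibility $P : H^{-1/2}(\Omega) \to H^{1/2}(\Omega)$.

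The main technical point I expect to be delicate is the quadratic form comparison for $Q$: one has to verify that the Fourier multipliers attached to the different blocks $D_j$ (each with its own Fourier basis inherited from viewing $D_j$ as a torus of period $4\pi/K$) produce $H^{-1/2}$ norms that are comparable with uniform constants across $j$ and across $|\xi| \to \infty$. This reduces to the uniform two sided bound on $(1 + a(\bar{x}_j)|\xi|^2)^{-1/2}$ coming from the assumed $L^\infty$ ellipticity of $a$. Beyond that, the argument is essentially algebraic, resting on the partition of unity identity $\sum_j g_j^2 = 1$ together with $Q$ being a uniformly elliptic Fourier multiplier of order $-1$; Lemma~\ref{f:lemma_inv_p} is not needed in this stream of reasoning, though it provides an alternative route through duality if one preferred to deduce the $H^{-1/2}$ lower bound on $W$ from its $H^{1/2}$ counterpart.
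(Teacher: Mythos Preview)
Your approach follows the same Lax--Milgram strategy as the paper: factor $P=W^*QW$, use that the block-diagonal $Q$ has quadratic form uniformly equivalent to the $H^{-1/2}$ norm on the product space, and conclude coercivity of $\langle Pu,u\rangle$ on $H^{-1/2}(\Omega)$. The genuine difference lies in how the lower bound $\|Wu\|_{H^{-1/2}}\gtrsim\|u\|_{H^{-1/2}}$ is obtained. The paper invokes Lemma~\ref{f:lemma_inv_p}, whose hard direction is proved via the commutator $[E_{(1)},g_j]$; you instead use the operator identity $W^*W=I$ (from $\sum_j g_j^2=1$) together with boundedness of $W^*$ on $H^{-1/2}$, which yields the inequality in one line with no commutator work. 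This is a valid and more elementary route --- indeed the same trick would shorten the paper's own proof of Lemma~\ref{f:lemma_inv_p}, since $\|u\|_{H^{1/2}}=\|W^*Wu\|_{H^{1/2}}\le\|W^*\|\,\|Wu\|_{H^{1/2}}$ already gives the hard direction there. One small imprecision worth flagging: you treat $Q$ as acting on $H^{-1/2}(\Omega,\mathbb{C}^{\tilde K})$, but each block $Q_j=\mathcal{F}_{D_j}^{-1}M_{1/2}\mathcal{F}_{D_j}$ is a Fourier multiplier on the small torus $D_j$, not on $\Omega$, so the passage from $\langle Q_j(g_ju),g_ju\rangle\gtrsim\|g_ju\|^2_{H^{-1/2}(D_j)}$ to $\gtrsim\|g_ju\|^2_{H^{-1/2}(\Omega)}$ requires the (standard) fact that the $H^s(D_j)$ and $H^s(\Omega)$ norms coincide for distributions supported away from $\partial D_j$. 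This localization, rather than the symbol uniformity you single out, is the actual bridge between the two norms; it is routine, and the paper glosses over it as well.
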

\begin{proof}
The operator $W$ maps from $H^{1/2}(\Omega)$ to  the Sobolev space
$H^{1/2} {(D_1 )}\times H^{1/2} {(D_2)}\times \cdots \times H^{1/2} (D_{\tilde{K}})$.
The proposition follows from the lemma we just proved and
the Lax-Milgram theorem, using the Fourier coefficient
description of this Sobolev spaces.
\end{proof}

\medskip

Next we study $P$ as an operator $L^2(\Omega) \rightarrow H^{-1}(\Omega)$,
by first proving that $P$ is a pseudo-differential operator.
\begin{proposition}\label{f:PsDOorder-1}
If the window functions $g_j(x) \in C_0^{\infty}(D_j)$,
then  the operator $P$ defined by
\[
  P= \sum_{j\in J} P_j=\sum_{j\in J} g_j S_j g_j
\]
is a periodic elliptic PsDO of order $-1$ where
$
 S_j = \mathcal{F}_{D_j}^{-1} M_{1/2} \mathcal{F}_{D_j}
$
is a periodic elliptic PsDO of order $-1$.
\end{proposition}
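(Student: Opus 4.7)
The plan is to split the proposition into two pieces: first establish that each $S_j$ is a periodic elliptic PsDO of order $-1$ on $D_j$, and then use the smoothness and compact support of the cut-offs $g_j$ to transfer the statement to the torus $\Omega = \mathbb{T}^n$ and finally to the finite sum.

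For $S_j$, I would observe that it is by construction a Fourier multiplier on $D_j$ whose symbol, evaluated at the integer frequencies $\xi_k$ dual to $D_j$, equals $(1 + a(\bar x_j)|\xi_k|^2)^{-1/2}$. The natural continuous extension
\[
\sigma_j(\xi) = (1 + a(\bar x_j)|\xi|^2)^{-1/2}, \qquad \xi \in \mathbb{R}^n,
\]
is smooth and satisfies the standard bounds $|\partial_\xi^\alpha \sigma_j(\xi)| \le C_\alpha (1+|\xi|)^{-1-|\alpha|}$, so $\sigma_j$ lies in the H\"ormander class $S^{-1}$. Its asymptotics $\sigma_j(\xi)\sim a(\bar x_j)^{-1/2}|\xi|^{-1}$ together with the coercivity $a(\bar x_j)\ge C>0$ supply both the order-$(-1)$ classification and the ellipticity of $S_j$ as a periodic PsDO on $D_j$.

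For the full operator, multiplication by $g_j\in C_0^\infty(D_j)$ is a PsDO of order $0$ with Schwartz kernel supported in $D_j\times D_j$. By the composition calculus of PsDOs on $\mathbb{T}^n$ (see \cite{SAPG}, as already used in the proof of Lemma~\ref{f:lemma_inv_p}), each summand $P_j = g_j S_j g_j$ is then a PsDO of order $-1$ on $\Omega$ whose principal symbol is the product $g_j(x)^2 \sigma_j(\xi)$. Summation over the finite index set $J$ preserves this class, and the principal symbol of $P$ satisfies
\[
p(x,\xi) \sim \sum_{j\in J} g_j(x)^2\, a(\bar x_j)^{-1/2}|\xi|^{-1}.
\]
Since $\sum_j g_j(x)^2 = 1$ and $a$ is bounded between two positive constants, $|\xi|\, p(x,\xi)$ is bounded below by a positive constant uniformly in $x$ for $|\xi|$ large, establishing ellipticity of $P$.

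The main technical obstacle will be reconciling the two periodicities, since $\mathcal{F}_{D_j}$ treats functions as periodic with respect to $D_j$ whereas $P$ is expected to be a PsDO on $\Omega$. The saving observation is that $g_j u$ is $C^\infty$ and vanishes on a neighbourhood of $\partial D_j$, so it defines the same smooth function regardless of whether one views it on the torus $D_j$ or on $\Omega$; moreover, the kernel of the periodic multiplier $S_j$ differs from its Euclidean counterpart only by the contribution of periodic images, which is smooth on the support of the outer $g_j$ and hence absorbed into a smoothing remainder of $P_j$. Once this identification is in place, the symbol calculus sketched above applies without modification.
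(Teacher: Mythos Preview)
Your proposal is correct and follows essentially the same route as the paper: verify symbol estimates for $S_j$, invoke the composition calculus to handle the cut-offs $g_j$, address the mismatch between the $D_j$-periodicity and the $\Omega$-periodicity via the observation that the extra periodic images contribute only a smoothing error on $\supp g_j$, and sum. One point worth noting is that your ellipticity argument for $P$, using $\sum_j g_j^2 = 1$ to bound the principal symbol from below, is more explicit than the paper's, which simply asserts ellipticity of the sum; your version is the cleaner justification.
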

Before we go to prove the proposition, we will need some definitions and
results about periodic pseudodifferential operators
\cite{M.Ruzhansky.V.Turunen2006, V.TurunenG.Vainikko1998}.
Let $\mathcal{D}(\mathbb{T}^n)$ be the vector space
$C^{\infty}(\mathbb{T}^n)$ endowed with the usual test function topology.
Then any continuous linear operator $A:
\mathcal{D}(\mathbb{T}^n)\rightarrow \mathcal{D}(\mathbb{T}^n)$ can be
represented as
\begin{equation}\label{f:smbl001a}
  (Au)(x) = \sum_{\xi\in \Z^n} \sigma_A (x, \xi) \hat u(\xi) e^{i x\cdot \xi},
\end{equation}
where
\begin{equation}\label{f:smbl001b}
  \sigma_A(x, \xi) = e^{-i x\cdot \xi} A e^{i x\cdot \xi}
\end{equation}
is called the symbol of the operator $A$.
Let $m\in \R$ and $0 \le \delta < \rho \le 1$. In our case $\delta=0$ and
$\rho=1$.
An operator defined by (\ref{f:smbl001a}) is called a periodic
pseudo-differential operator (PsDO) of order $\alpha$
if the unique function $\sigma_A \in C^{\infty}(\mathbb{T}^n \times \Z^n)$
defined by (\ref{f:smbl001b})
%
satisfies
\begin{equation}\label{f:smbl001c}
|\Delta_\xi^{\alpha} \partial_x^{\beta}\sigma_A (x, \xi)| \le C_{\sigma \alpha \beta m}
\langle \xi \rangle^{m- \rho|\alpha|+ \delta |\beta|}
\end{equation}
for every $x\in \mathbb{T}^n$, for every $\alpha, \beta \in \N^n$ and
$\langle \xi \rangle := (1+|\xi|^2)^{1/2}$.
Here by $S^m_{\rho, \delta}(\mathbb{T}^n \times \Z^n)$ we denote the space of
functions $\sigma_A \in C^\infty(\mathbb{T}^n \times \Z^n)$ that satisfies (\ref{f:smbl001c}).
If $\sigma_A \in S^m_{\rho, \delta}(\mathbb{T}^n \times \Z^n)$, one may denote $A \in \operatorname{Op}S^m_{\rho, \delta}(\mathbb{T}^n \times \Z^n)$.
An operator $A \in \operatorname{Op}S^m_{\rho, \delta}(\mathbb{T}^n \times \Z^n)$
is called elliptic if, in addition,  the symbol
$\sigma_A \in S^m_{\rho, \delta}(\mathbb{T}^n \times \Z^n)$ of $A$ satisfies
 $|\sigma_A(x, \xi)| \ge C |\xi|^{m}$
where $x \in\R^n$, $C>0$ and $|\xi| \ge \xi_0 \ge 0$.

\begin{proof}[Proof of proposition~\ref{f:PsDOorder-1}]
First we  show that the operator
$S_j=\mathcal{F}_{D_j}^{-1} M_{1/2} \mathcal{F}_{D_j}$
is a periodic elliptic PsDO with period $\frac{4\pi}{K}$ on each of the coordinate directions.
Now for any $u \in \mathcal{D}(\mathbb{T}^n)$ we have
$ S_j u  =  \mathcal{F}_{D_j}^{-1} \left(M_{1/2} \left(\mathcal{F}_{D_j} u(\xi)\right)\right)$,
with  symbol
$a_j(\xi)=M_{\frac{1}{2}}(\xi) = \frac{1}{\sqrt{1+\xi^2}}$.
Consider $v_k\in\Z^n$ with $(v_k)_k=1$ and $(v_k)_i=0$ if $i\ne k$.
Now using difference calculus
\begin{eqnarray*}
\Delta_{\xi_k}^1 M_{\frac{1}{2}}(\xi) & = & M_{\frac{1}{2}}(\xi+v_k)-M_{\frac{1}{2}}(\xi)
 =   (1+a(x_j)(\xi+v_k)^2)^{-1/2} -  (1+a(x_j)\xi^2)^{-1/2},
\end{eqnarray*}
 gives
\[
 |\Delta_{\xi_k}^1 M_{\frac{1}{2}}(\xi) |\le C(1+|\xi|)^{-2},
\]
and similarly for higher order differences,
and thus $M_{\frac{1}{2}}(\xi) \in S_{\rho, \delta}^{-1}(\mathbb{T}^n \times \Z^n)$
\cite{M.Ruzhansky.V.Turunen2006, V.TurunenG.Vainikko1998}.
That is to say that $S_j$
is a periodic elliptic PsDO of order $m=-1$ with period $\frac{4\pi}{K}$ on each of the coordinate directions (using the binomial theorem and the Leibnitz formula for differences~\cite{M.Ruzhansky.V.Turunen2006} one can deduce the similar relation for any $\alpha$).
The previous statement implies that the distribution kernel of $S_j$, let us
denote it here by $K(x, y)$, has singularities at $x = y+\frac{2\pi}{K}k$, where $k\in\Z^n$. Therefore, when it is restricted to $x \in \supp(g_j )$,
$y \in \supp(g_j)$, then the singularities are contained in the set $x = y$.

Then we show that
$
 P_j = g_j(x) S_j(x, \xi) g_j(x)
$
is a PsDO for all $j\in J$.
Here we consider $g_j(x)\in C^{\infty}_0(D_j)$, $j \in J$, and thus $ g_j$ is a PsDO of order $0$.
Then it follows from \cite{LarsHormenderIII, MWWong1991} that the composition
$ S_j g_j $ is a periodic elliptic PsDO of order $-1$, and so is the operator (since $S_j$ is a PsDO of order $-1$)
$P_j$ with principal symbol
$\sigma_j(x, \xi) = g_j(x) a_j(\xi) g_j(x).$
Since $g_j(x)\in C_0^{\infty}(D_j)$, it can be viewed as $g_j(x)\in C_0^{\infty}(\Omega)$. Thus  for all $j \in J$ the symbols $\sigma_j(x, \xi)$ can be extended to $\sigma_j \in S^{-1}_{\rho, \delta}(\Omega \times \Z^n)$.

We have
$
P = \sum_{j\in J} g_j S_j g_j.
$
Here  $g_jS_j g_j$, $j\in J$ are periodic elliptic PsDOs of order $-1$.
Let us denote the principal symbol of $S_j$ by $a_j$.
So it follows that $P$ is a periodic elliptic PsDO of order $-1$ since $P$ is a sum of PsDOs of order $-1$.
\end{proof}

Since $P$ is an elliptic PsDO, it follows from \cite{LarsHormenderIII,
  MWWong1991} and references there in that there is a parametrix $Q$
such that $QP = I + R$ with $R$ a smoothing operator. For our purposes
it is sufficient that $R$ is a PsDO of order $-1$ (see PsDO
literature, parametrix, for example \cite[page 18]{SAPG}). Let us now
discuss the kernel of the operator $P$. The kernel of $P$ is contained
in the kernel of $QP$. The operator $R$ is compact. We work with
operators on $L_2$, so it is an operator from $L_2$ to $H^1$, and
therefore compact as an operator on $L_2$.  But since this is PsDO
theory, we can also work on operators on $H^s$, and then $R$ is
continuous $H^s \rightarrow H^{s+1}$ and compact as an operator on
$H^s$.
From the theory of compact operators, it follows that, for any $C >
0$, there is at most a finite dimensional set of vectors in $L_2$,
such that $\|Ru\|_{L_2} \ge C \|u\|$ (e.g. set $C = 1/2$.) Then it
shows also that the kernel of $P Q$ is finite dimensional and hence
that the kernel of $P$ is finite dimensional.
Similarly the cokernel of $P$, which is defined as the kernel of $P^∗$
is finite dimensional.
We claim that the elements of $\ker(QP )$ are in $C^\infty$. The elements of
$\ker(QP )$ satisfy  $u = -Ru.$
Let us say $u \in L_2$. Then $Ru \in H^1$ because $R$ is a PsDO of order $-1$. But
$u = -Ru$, so in fact $u \in H^1$. But then $Ru \in H^2$ , because $R$ is continuous
$H^1\rightarrow  H^2$. And so forth. It follows that $u \in C^{\infty}.$
Since we already proved in Proposition~\ref{prop:P_Hhalf_inv}
that $P$ is invertible $H^{-1/2} \rightarrow H^{1/2}$,
it follows that the kernel and cokernel of $P$ are the zero sets and that
$P$ is invertible $H^{s} \rightarrow H^{s+1}$, $s \in \R$.

Next we study the inverse $P^{-1}$ and show that it is a PsDO.
As $P$ is an elliptic PsDO, there exists $Q\in
\operatorname{Op}(S^{1})$ \cite{LarsHormenderIII, MWWong1991} such that
$  QP = I+R$
and
$
  PQ= I + \tilde R,
$
where $R \in \operatorname{Op}(S^{-\infty})$ and $\tilde R \in
\operatorname{Op}(S^{-\infty})$.
Let us consider
$
 V:H^{1/2} \rightarrow H^{-1/2}
$
be the inverse of $P: H^{-1/2}\rightarrow H^{1/2}$.
Then one gets
\begin{equation}\label{f:psinv01}
 QPV = (I+R)V
\end{equation}
and
\begin{equation}\label{f:psinv01a}
 QPV = Q.
\end{equation}
Combining (\ref{f:psinv01}) and (\ref{f:psinv01a})
we have
$
 (I+R)V=Q
$
and so
\begin{equation}\label{f:qsr001}
 Q-V=RV.
\end{equation}
Also one can write
\begin{equation}\label{f:psinv02}
 VPQ = Q
\end{equation}
and
\begin{equation}\label{f:psinv02a}
 VPQ = V(I+\tilde R).
\end{equation}
Combining (\ref{f:psinv02}), (\ref{f:psinv02a}) and (\ref{f:qsr001}) one gets
\[
Q-V=V\tilde R = (Q+(V-Q))\tilde R= Q\tilde R - RV \tilde R\in OP(S^{-\infty}),
\]
and so $V \in OP(S^{1})$.

From here we may conclude that Theorem~\ref{f:theorem001} is proved.
However, there is no estimate of $c_1$, so we add a slightly more direct
estimate for the constant $c_1$.
\begin{proposition}\label{f:theorem009a}
There are constants $0 < A \le B$ such that
\begin{equation} \label{eq:prop_final_estimate}
A \|Pu\|_{H^1} \le \|u\|_{L_2} \le B \|Pu\|_{H^1}, \quad \text{for any}\quad u\in L_2(\Omega).
\end{equation}
\end{proposition}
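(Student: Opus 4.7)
The plan is to derive the two inequalities of (\ref{eq:prop_final_estimate}) separately, reading each off from structure already established for $P$.

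For the upper estimate $A\|Pu\|_{H^1}\le \|u\|_{L_2}$, I would argue directly from the decomposition $P=\sum_{j\in J}g_j S_j g_j$. On each block, Plancherel gives
\[
  \|S_j(g_j u)\|_{H^1(D_j)}^2 = \sum_{\xi} (1+|\xi|^2)\bigl(1+a(x_j)|\xi|^2\bigr)^{-1} |\widehat{g_j u}(\xi)|^2 \le C_1\|g_j u\|_{L_2(D_j)}^2,
\]
where $C_1$ depends only on the lower bound $a(x)\ge C>0$. Multiplication by $g_j\in C_0^\infty$ is continuous on $H^1$ with a constant controlled by $\|g_j\|_{C^1}$, and at each point only finitely many windows contribute, so summing and using $\sum_j g_j^2=1$ yields $\|Pu\|_{H^1}\le C_2 \|u\|_{L_2}$, which is the desired upper bound with $A=1/C_2$.

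For the lower estimate $\|u\|_{L_2}\le B\|Pu\|_{H^1}$ I would appeal to the invertibility of $P$ just established. The parametrix argument preceding the proposition, combined with Proposition~\ref{prop:P_Hhalf_inv}, shows that $P:H^s(\Omega)\to H^{s+1}(\Omega)$ is a bijection for every $s\in\R$. Specializing to $s=-1$ gives a bounded inverse $P^{-1}:L_2(\Omega)\to H^{-1}(\Omega)$. To make the constant explicit I would dualize: for $v\in L_2$ set $w=P^{-1}v\in H^{-1}$, so that using the self-adjointness $P=P^*$,
\[
  \langle u,v\rangle_{L_2} = \langle u,Pw\rangle = \langle Pu,w\rangle_{H^1,H^{-1}} \le \|Pu\|_{H^1}\,\|P^{-1}\|_{L_2\to H^{-1}}\|v\|_{L_2}.
\]
Taking the supremum over $v$ with $\|v\|_{L_2}=1$ yields $B=\|P^{-1}\|_{L_2\to H^{-1}}$.

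The only real obstacle is the lower bound: the upper bound is a routine symbol estimate, but any lower bound inherently encodes invertibility of $P$. The cleanest way is to invoke the parametrix/Proposition~\ref{prop:P_Hhalf_inv} machinery already developed; a fully self-contained estimate would instead combine the G\aa rding-type inequality $\langle Pu,u\rangle \ge c\sum_j\|g_j u\|_{H^{-1/2}}^2$ (from positivity of the symbols $(1+a(x_j)|\xi|^2)^{-1/2}\ge c(1+|\xi|^2)^{-1/2}$) with an $H^{-1/2}$ analogue of Lemma~\ref{f:lemma_inv_p} to get $c\|u\|_{H^{-1/2}}^2\le \langle Pu,u\rangle\le \|Pu\|_{H^{1/2}}\|u\|_{H^{-1/2}}$, then bootstrap to the $L_2\to H^1$ statement, but this is more work than the direct approach and offers no sharper constants.
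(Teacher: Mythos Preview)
Your argument is correct. For the upper bound $A\|Pu\|_{H^1}\le\|u\|_{L_2}$ you are doing exactly what the paper does, only spelled out: the paper just says this follows from $P$ being a pseudodifferential operator of the right order (there is a typo in the paper --- it writes ``order $1$'' where ``order $-1$'' is meant, and it also swaps ``first'' and ``second'' inequality), and your block-by-block Plancherel computation is precisely the content of that statement.

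For the lower bound $\|u\|_{L_2}\le B\|Pu\|_{H^1}$ you take a genuinely different route. You invoke the full $H^{s}\!\to\!H^{s+1}$ invertibility of $P$ (obtained from the parametrix argument together with Proposition~\ref{prop:P_Hhalf_inv}) and then read off the constant $B=\|P^{-1}\|_{L_2\to H^{-1}}$ by a clean duality computation. The paper instead gives a commutator argument: it introduces the scaled operators $E_{\lambda,1/2}$ with symbol $(1+|\xi|^2/\lambda^2)^{1/4}$, writes $\|Pu\|_{H^1}\ge \|PE_{\lambda,1/2}u\|_{H^{1/2}}-\|[E_{\lambda,1/2},P]u\|_{H^{1/2}}$, bounds the main term below using only the $H^{-1/2}\!\to\!H^{1/2}$ invertibility from Proposition~\ref{prop:P_Hhalf_inv}, shows the commutator is $O(\lambda^{-1})\|u\|_{L_2}$ via the expansion $[E_{\lambda,1/2},g_j]=\sum_k \lambda^{-k-1}B_k(x,\xi/\lambda)$, and then chooses $\lambda$ large.

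What this buys: your approach is shorter given everything already on the table, but the resulting constant $B$ is hidden inside the abstract parametrix construction. The paper's approach is there precisely because the preceding paragraph says ``there is no estimate of $c_1$, so we add a slightly more direct estimate'': it bypasses the parametrix step entirely and expresses the constant in terms of the Proposition~\ref{prop:P_Hhalf_inv} bound and an explicit commutator norm. So your proof establishes the proposition, but it does not deliver the more concrete constant that is the proposition's stated purpose.
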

\begin{proof}
The second inequality follows from the fact that $P$ is a pseudodifferential
operator of order 1. For the first inequality consider the family of
pseudodifferential operators $E_{\lambda,s}$, with symbol
$
 E_{\lambda, s}(x,\xi) = \left(1+\frac{\xi^2}{\lambda^2}  \right)^{s/2}.
$
Clearly $Pu$ satisfies
\begin{equation} \label{eq:final_estimate_1}
\begin{split}
  \| P u \|_{H^1}
  \ge {}& \| E_{\lambda,1/2} P u \|_{H^{1/2}}
  = \left\| P E_{\lambda,1/2} u + [ E_{\lambda,1/2}, P ] u \right\|_{H^{1/2}}
\\
  \ge {}& \left\| P E_{\lambda,1/2} u \right\|_{H^{1/2}}
    - \left\| [ E_{\lambda,1/2}, P ] u \right\|_{H^{1/2}} .
\end{split}
\end{equation}
The first term can be estimated by
\[
  \left\| P E_{\lambda,1/2} u \right\|_{H^{1/2}}
  \ge \sqrt{\lambda} \| P \|_{H^{-1/2} \rightarrow H^{1/2}} \| u \|_{L^2} .
\]
The second term can be written as
\begin{equation} \label{eq:comm_Elambda_P}
  \sum_j [ E_{\lambda,1/2}, g_j S_j g_j ]
  = \sum_j [ E_{\lambda,1/2}, g_j ] S_j g_j + g_j S_j [ E_{\lambda,1/2}, g_j ]
\end{equation}
The commutator $[ E_{\lambda,1/2}, g_j ]$ is a pseudodifferential operator
whose symbol can estimated in terms of $\lambda$, it is given by an
asymptotic sum of the form
\[
  \sum_{j=0}^\infty \frac{1}{\lambda^{j+1}} B_j(x,\frac{\xi}{\lambda})
\]
where the $B_j$ are symbols of order $-1/2 - j$. It follows that we have
the estimate
\begin{equation} \label{eq:comm_Elambda_g}
  \left\| [ E_{\lambda,1/2}, g_j ] \right\|_{H^s \rightarrow H^s}
  \le C \lambda^{-1} .
\end{equation}
Equations (\ref{eq:comm_Elambda_P}) and (\ref{eq:comm_Elambda_g}) show
that the second term on the r.h.s.\ of
(\ref{eq:final_estimate_1}) can be estimated by $C \lambda^{-1} \| u \|_{L^2}$.
By choosing $\lambda$ large enough, it follows that the first inequality
of (\ref{eq:prop_final_estimate}) holds.
\end{proof}

\section{Numerical experiments}\label{Section4:fff}
Now the question arises how efficient it is to solve constant and variable coefficient PDEs using the proposed preconditioner? Does the preconditioned system converge faster than the unpreconditioned system while using some iterative solvers for linear system of equations, for example, the conjugate gradient method?
To investigate it, we experiment our technique focusing elliptic BVPs and compare it with the conjugate gradient method within the framework of \mat.

\subsection{One dimensional examples}
Let us start with a one dimensional case ($d=1$). Then the BVP (\ref{f:ellipticbvp01_a})
takes the form
\begin{equation}\label{f:ellipticbvp01_aa}
-\frac{d}{dx}\left(a(x)\frac{du(x)}{dx}\right)+ b(x) u(x)= f(x), \quad \forall\quad x\in\Omega\subset \R
\end{equation}
with
\[
 u=0\quad \text{on}\quad \partial\Omega
\]
where $a(x),$ and $b(x)$ are known functions of $x$ with $a(x) \ne 0.$ We approximate the problem $\mathcal{L}u=f$ using a finite difference scheme.  Let us define the grids as $\hat{x}_j=jh$, $j=0, 1, 2, \cdots,N$ with grid spacing $h$,  and
$N\ge 2$.
We define the approximations to the  solution $u$  by  $u_j$ defined on the grid points $\hat{x}_j$ for all $j=0, 1, 2, \cdots,N$.
The BVP (\ref{f:ellipticbvp01_aa}) can  be approximated by
\begin{equation}\label{f:discrete_CDS001}
-\frac{1}{h^2}\left(a_{i+\frac{1}{2}} u_{i+1}- u_{i}(a_{i+\frac{1}{2}}+a_{i-\frac{1}{2}}) + a_{i-\frac{1}{2}}u_{i-1}\right) + b_{i} u_i = f_i,
\end{equation}
for all $i=1, 2, \cdots, N$ where $a_i=a(\hat{x}_i)$, $u_i=u(\hat{x}_i)$,
$b_i=b(\hat{x}_i)$, $f_i=f(\hat{x}_i)$,
$a_{i \pm \frac{1}{2}}=\frac{a_i + a_{i \pm 1}}{2}$ and using boundary condition
$u_0=u_N=0$.
We write (\ref{f:discrete_CDS001}) in the matrix form as
$
 \mathcal{A}u=f.
$
In Figure~\ref{fig:cndntnnumber}, we  plot spectral radius and condition numbers of $\mathcal{A}$ for several choices of system size to demonstrate the
polynomial growth of condition numbers.
\begin{figure}[ht]
\begin{center}
  \includegraphics[width=0.85\textwidth,height=9.5cm]{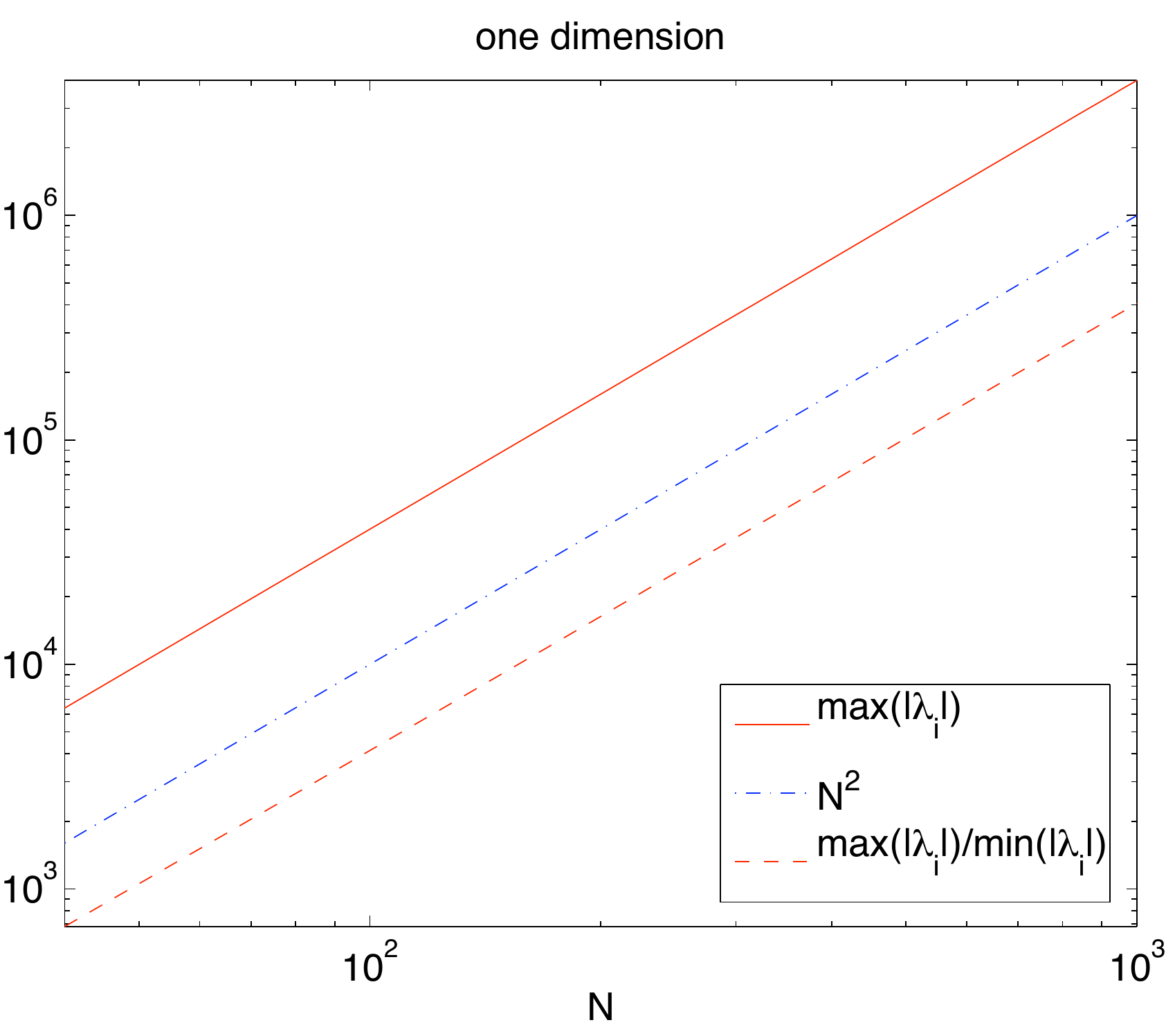}
\end{center}
\caption{We show spectral radius, and condition number of the matrix operator acting on (\ref{f:discrete_CDS001}) for various choices of system sizes with $a=1$ and $b=0$.}
    \label{fig:cndntnnumber}
    \end{figure}
It is noticed from Figure~\ref{fig:cndntnnumber} (also several articles and books on finite difference schemes have a clear discussion about the eigenvalues of $\mathcal{A}$, e.g., \cite{NHA001, CDM001, AQRSFS01} and references therein) that  the condition number of a matrix $\mathcal{A}$  grows like $\mathcal{O}(1/h^2)$ (when $a(x)=1$),  and as a result the convergence of any iterative methods become slower.

We consider three examples to address the questions arise in our
previous discussion. In the first example, we consider $a(x)=1$,
$b(x)=0$ and observe singular values and condition numbers of the
symmetric and left preconditioned matrices. We also compare the number
of iterations taken by the preconditioned solvers to converge.  Then
we repeat our experiments when $a(x)$ varies with $x$ in the second
example. Here we compare condition numbers by varying number of
windows (to cover the domain) to show the efficiency of windowing. We
also show the advantage of using the preconditioned CG solver by
comparing the number of iterations with the CG solver. In the third
example the coefficient varies discontinuously.

\begin{example}\label{f:example_constant_coeff01}
Consider the BVP
\[
  -\frac{d^2}{dx^2}u(x)=f(x), \quad \forall \quad 0< x< 1,
\]
with boundary conditions
\begin{enumerate}

\item $u(0)=0$ and $u(1)=0$

\item periodic boundary condition $u(0)=u(1)$.
\end{enumerate}
The discrete operator $\mathcal{A}$ can be found in (\ref{f:discrete_CDS001}),
to enforce the periodicity, when needed, we define $\mathcal{A}(N,1)=\mathcal{A}(1,N)=-\frac{1}{h^2}$.
In Figure~\ref{fig:singlarvalues_of_PAP_PAa} 
we compare singular values of $P\mathcal{A}P$ and $P\mathcal{A}$ with the singular values of $\mathcal{A}$ (with periodic and non-periodic settings).
\begin{figure}[ht]
\begin{center}
\includegraphics[width=0.49\textwidth,height=7.5cm]{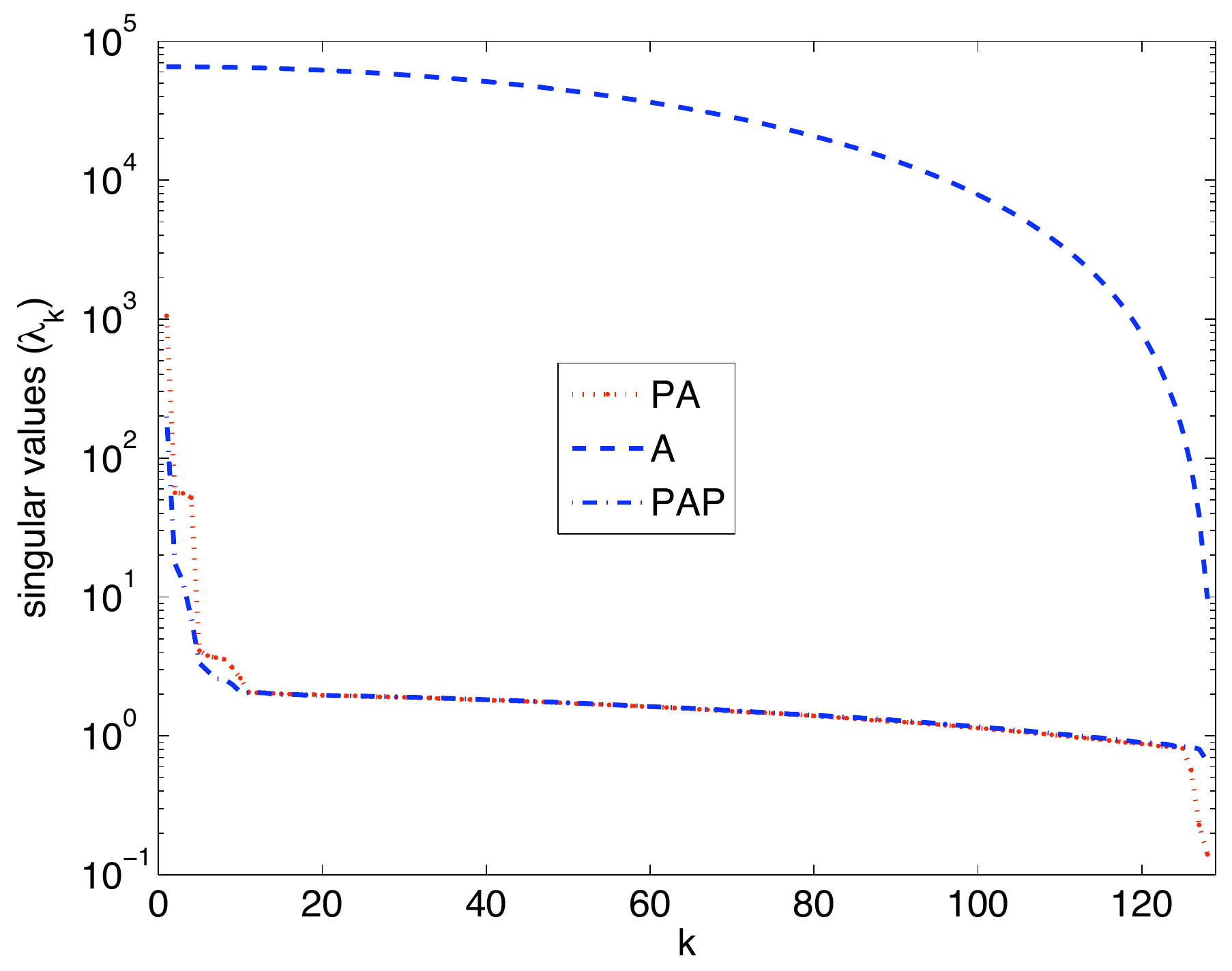}
\includegraphics[width=0.49\textwidth,height=7.5cm]{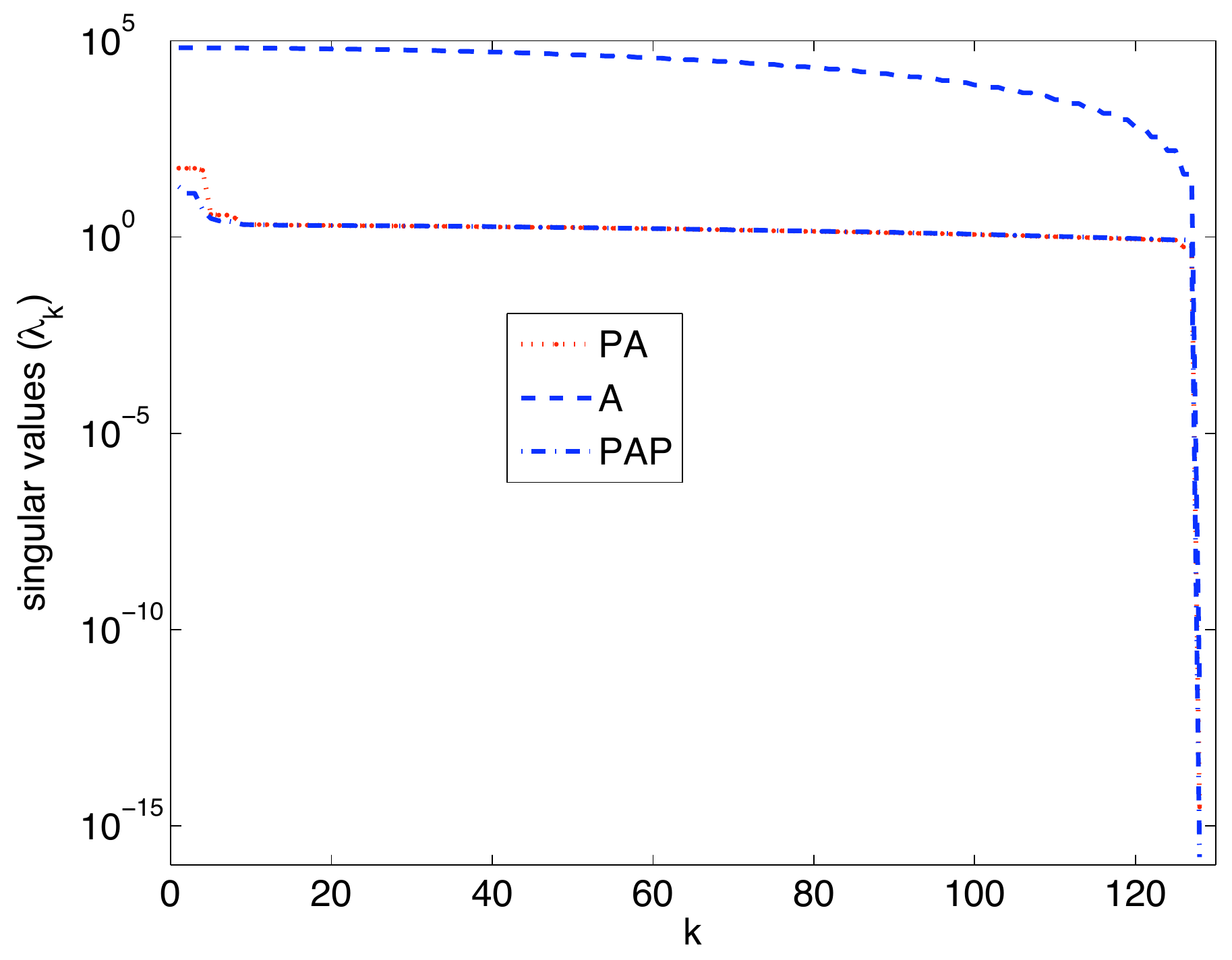}
\end{center}
\caption{Here we plot singular values of the preconditioned operators (non-periodic operator (left figure)
and periodic operator (right figure))  where $K=4$ and $N_l=2^6$. From here  $A$ appearing on the figures represents the discrete operator $\mathcal{A}$.}
\label{fig:singlarvalues_of_PAP_PAa}
\end{figure}
From these computations,  we notice that
most of the singular values are close to one, whereas the singular values / eigenvalues of $\mathcal{A}$ are  \cite[page 561]{CDM001}
\[
\lambda_j=\frac{4}{h^2} \sin^2\left(\frac{j\pi}{2(N+1)}\right), \quad \forall \quad j=1, 2, \cdots, N.
\]
In the periodic case there is a singular value zero due to the constant
solutions.

In Figure~\ref{fig:cndntnnumberall22} and Figure~\ref{fig:cndntnnumberall_1a}, we show  the condition numbers of the symmetric and the left preconditioned operators considering $\mathcal{A}$ as a periodic and a non-periodic operator respectively.  We notice that the condition numbers of $P\mathcal{A}P$ and $P\mathcal{A}$ appear to be bounded when $\mathcal{A}$ is periodic. The condition number of $P\mathcal{A}P$ grows as $\mathcal{O}(N)$ whereas condition number of $P\mathcal{A}$ grows similar to the unpreconditioned operator $\mathcal{A}$ when $\mathcal{A}$ is non-periodic.
We also notice from Figure~\ref{fig:cndntnnumberall_1a} that
$\frac{\lambda_3}{\lambda_{N-2}}$ grows much slower or hardly at all for both
the Fourier frame preconditioned operators $P\mathcal{A}P$ and
$P\mathcal{A}$ (non-periodic operators), whereas
$\frac{\lambda_3}{\lambda_{N-2}}$ grows as of
$\frac{\lambda_1}{\lambda_{N}}$ for the unpreconditioned operator
$\mathcal{A}$ ($\lambda_3$ is the third highest singular value and
$\lambda_{N-2}$ is the third lowest singular value). Thus analyzing
Figure~\ref{fig:singlarvalues_of_PAP_PAa},
Figure~\ref{fig:cndntnnumberall22}, and
Figure~\ref{fig:cndntnnumberall_1a} we expect that the left
preconditioner and the symmetric preconditioner would perform well
since most of the singular values are clustered to a constant for the
both preconditioned system (though the condition number of the left
preconditioner grows as of $\mathcal{A}$ when we consider a non
periodic boundary condition).
\begin{figure}[ht]
\begin{center}
\includegraphics[width=0.79\textwidth,height=8.5cm]{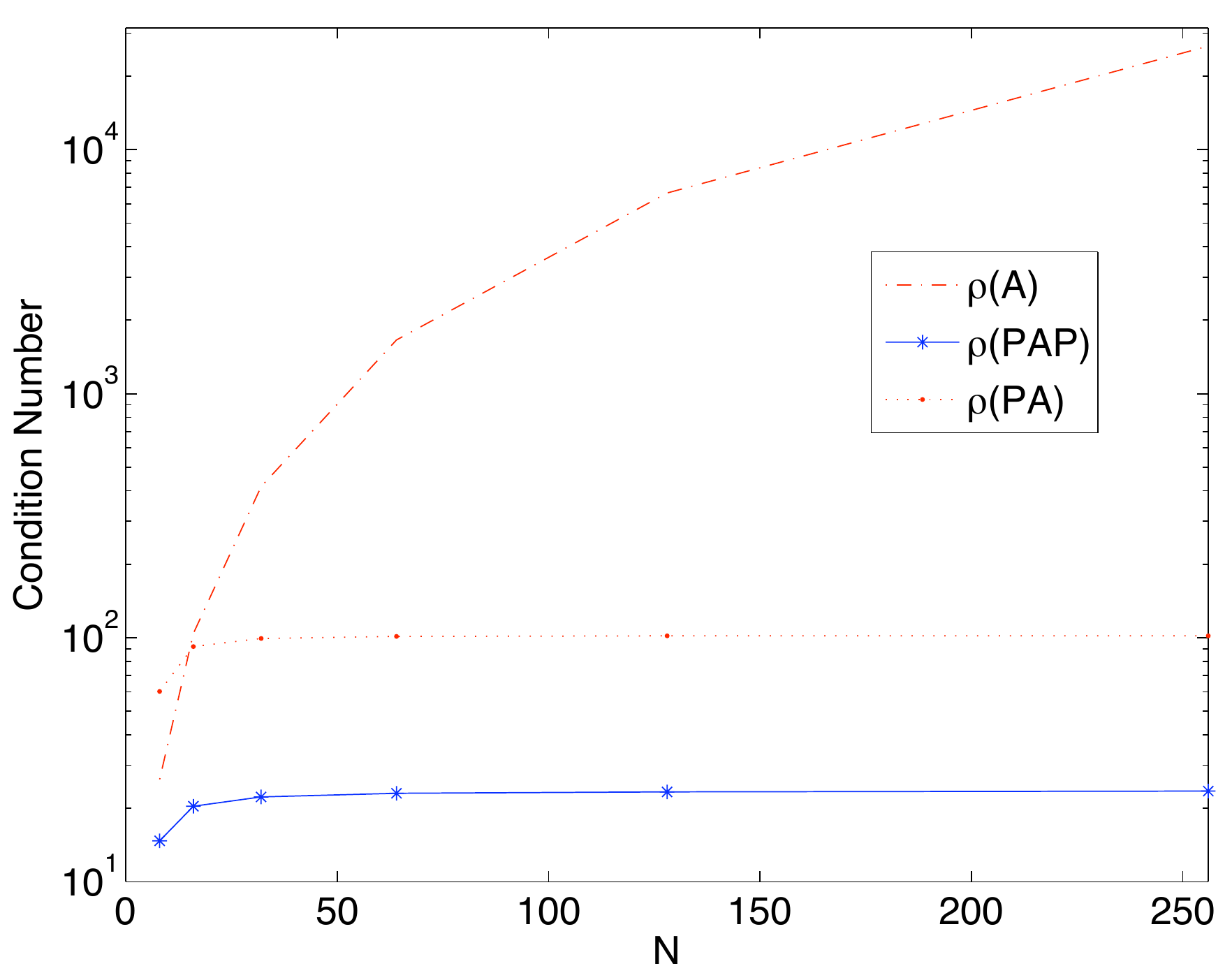}
\end{center}
\caption{Condition numbers of the operators for various choices of system sizes (periodic case).}
    \label{fig:cndntnnumberall22}
    \end{figure}
\begin{figure}[ht!]
\begin{center}
\includegraphics[width=0.49\textwidth,height=7.5cm]{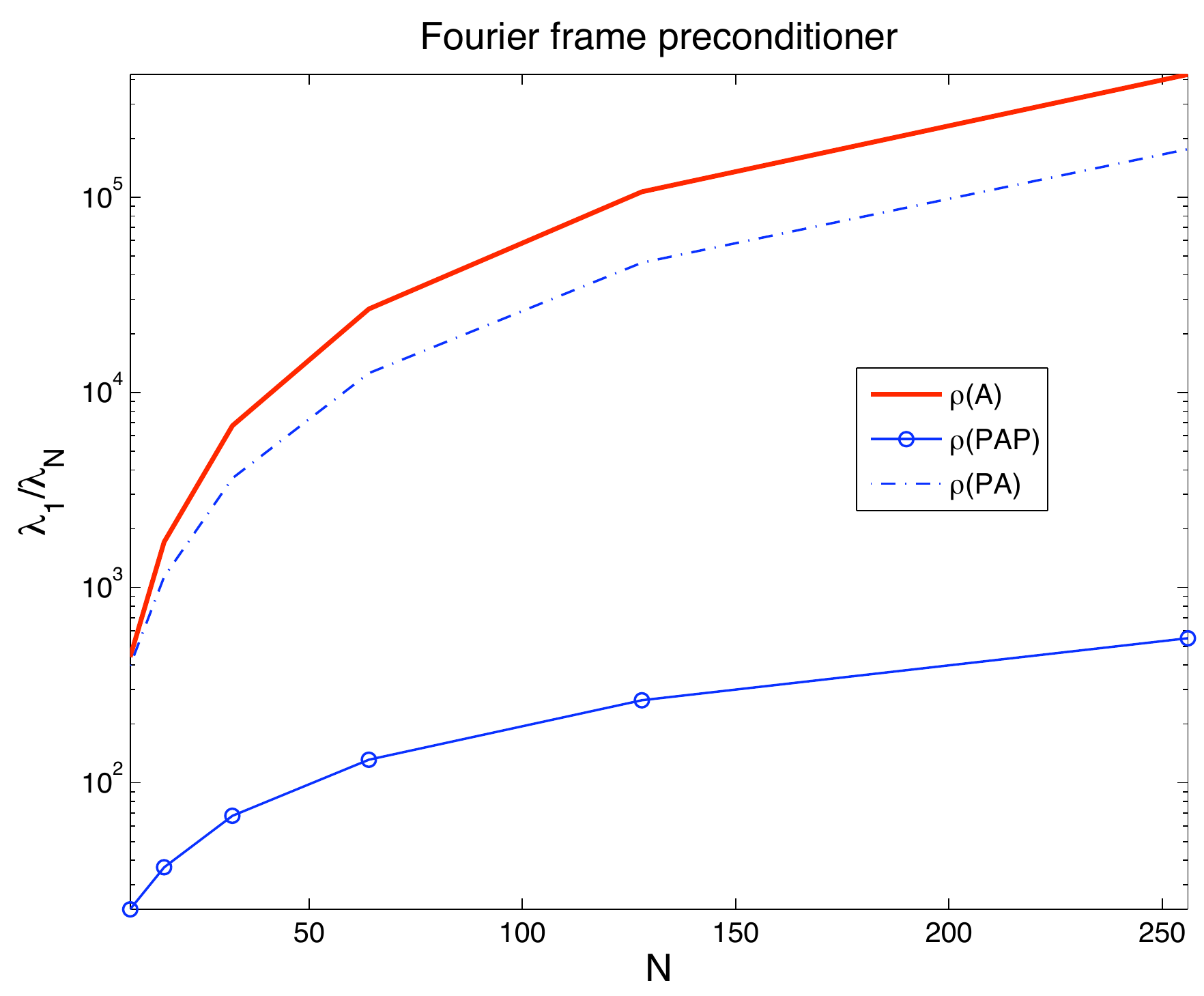}
\includegraphics[width=0.49\textwidth,height=7.5cm]{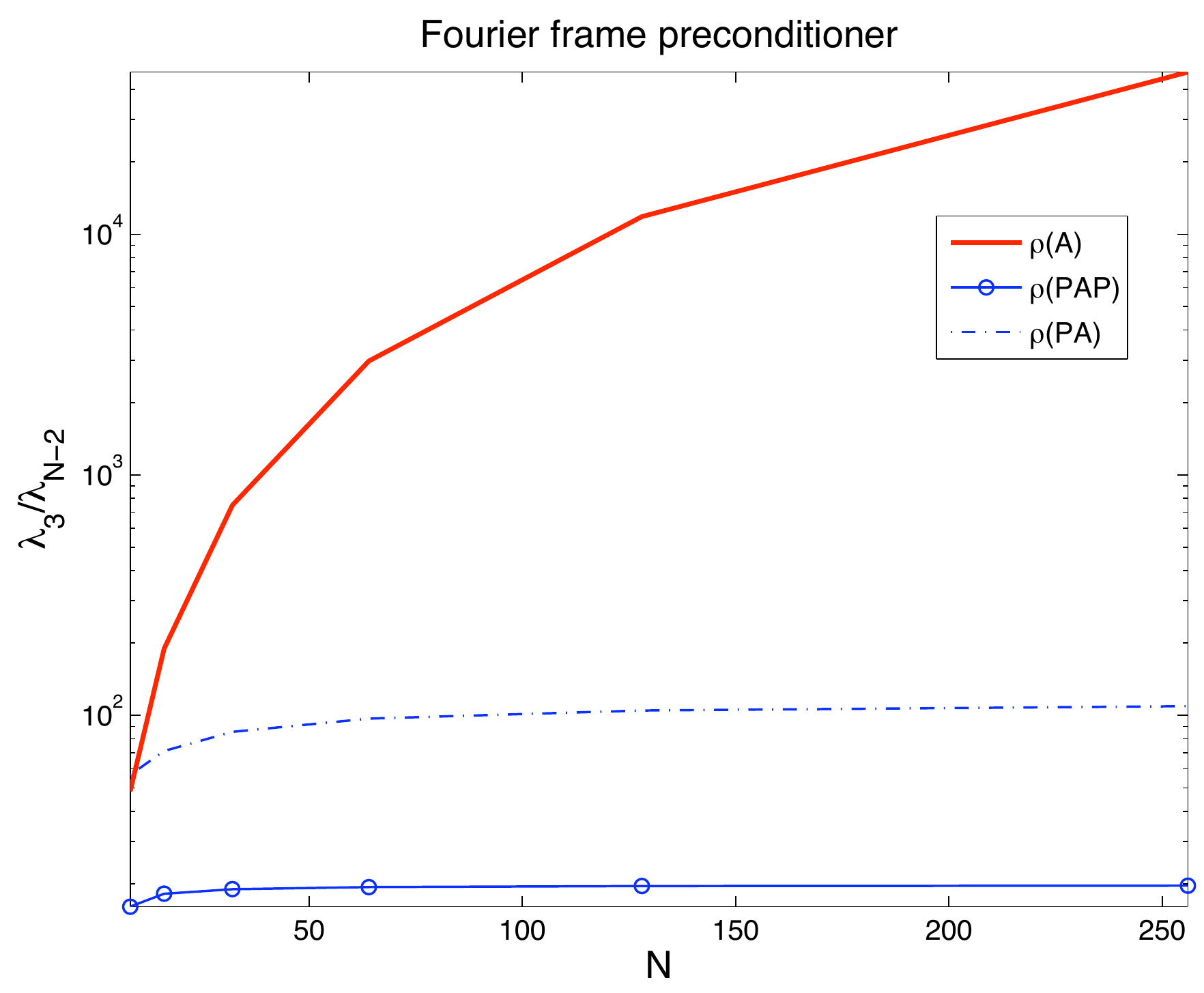}
\end{center}
\caption{Condition numbers of the  windowed Fourier frame preconditioned operators (non-periodic case)  for various choices of system size.}
\label{fig:cndntnnumberall_1a}
\end{figure}
The condition number of $P\mathcal{A}$ and $\mathcal{A}P$ are the
same, so we present experimental results of the left preconditioned
system only.

Then we focus on solving the problem using the conjugate gradient
solvers.  We present the number of iterations taken to converge for
three different choices of solvers considering
$f(x)=\exp(2\pi(x-0.5))$ in Figure~\ref{fig:1111_1a}.  From this
experiment we notice that both the SPCG and the LBICG take a very few
iterations to converge compared to the CG (for both the periodic and
the non-periodic boundary value problems).

This is to note that we have considered windowed sine frames (WSF) as
well for the same computations. It produces the similar results as of
WFF, so one can also use WSF for the computations as well.
\begin{figure}[ht!]
\begin{center}
\includegraphics[width=0.49\textwidth,height=7.5cm]{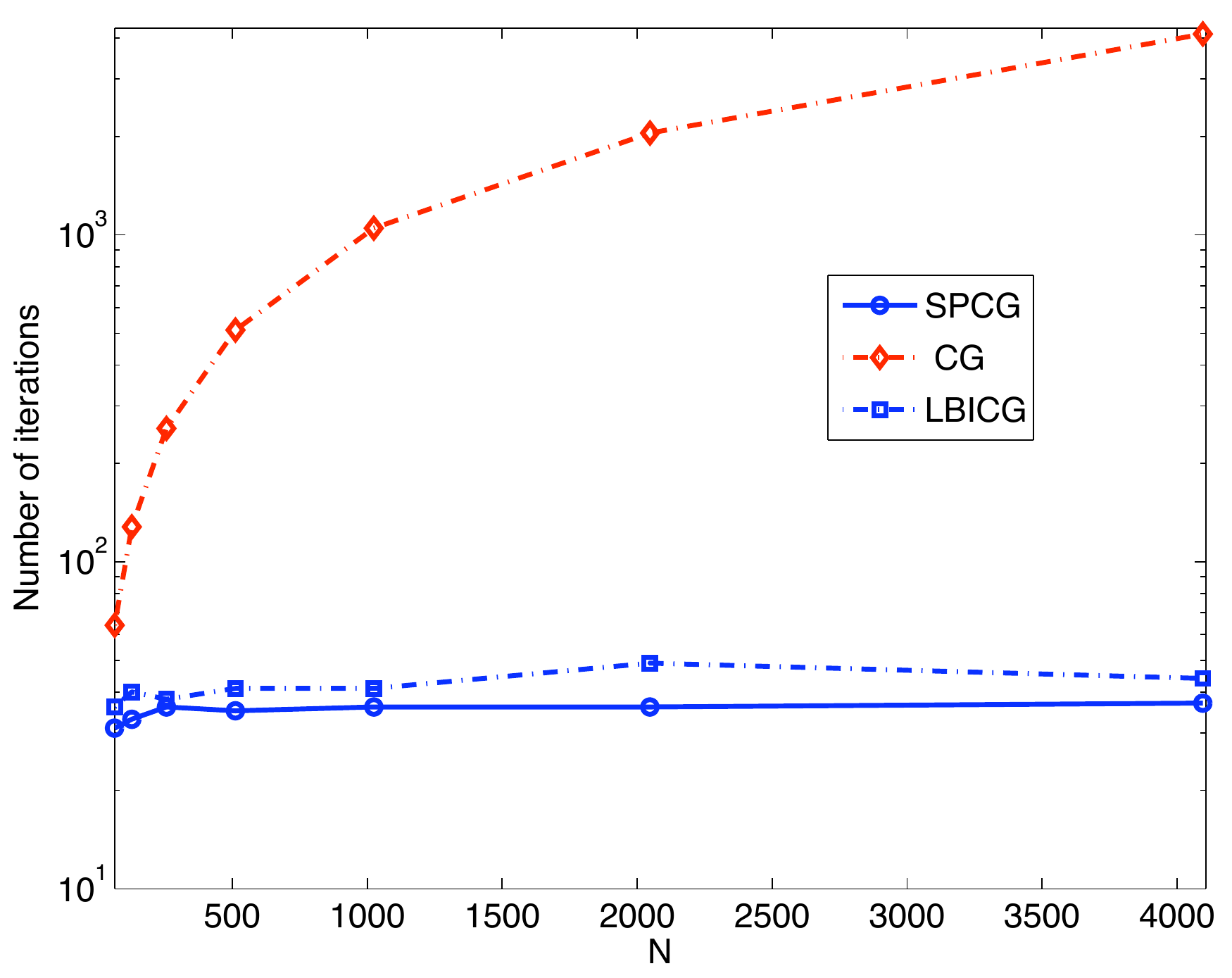}
\includegraphics[width=0.49\textwidth,height=7.5cm]{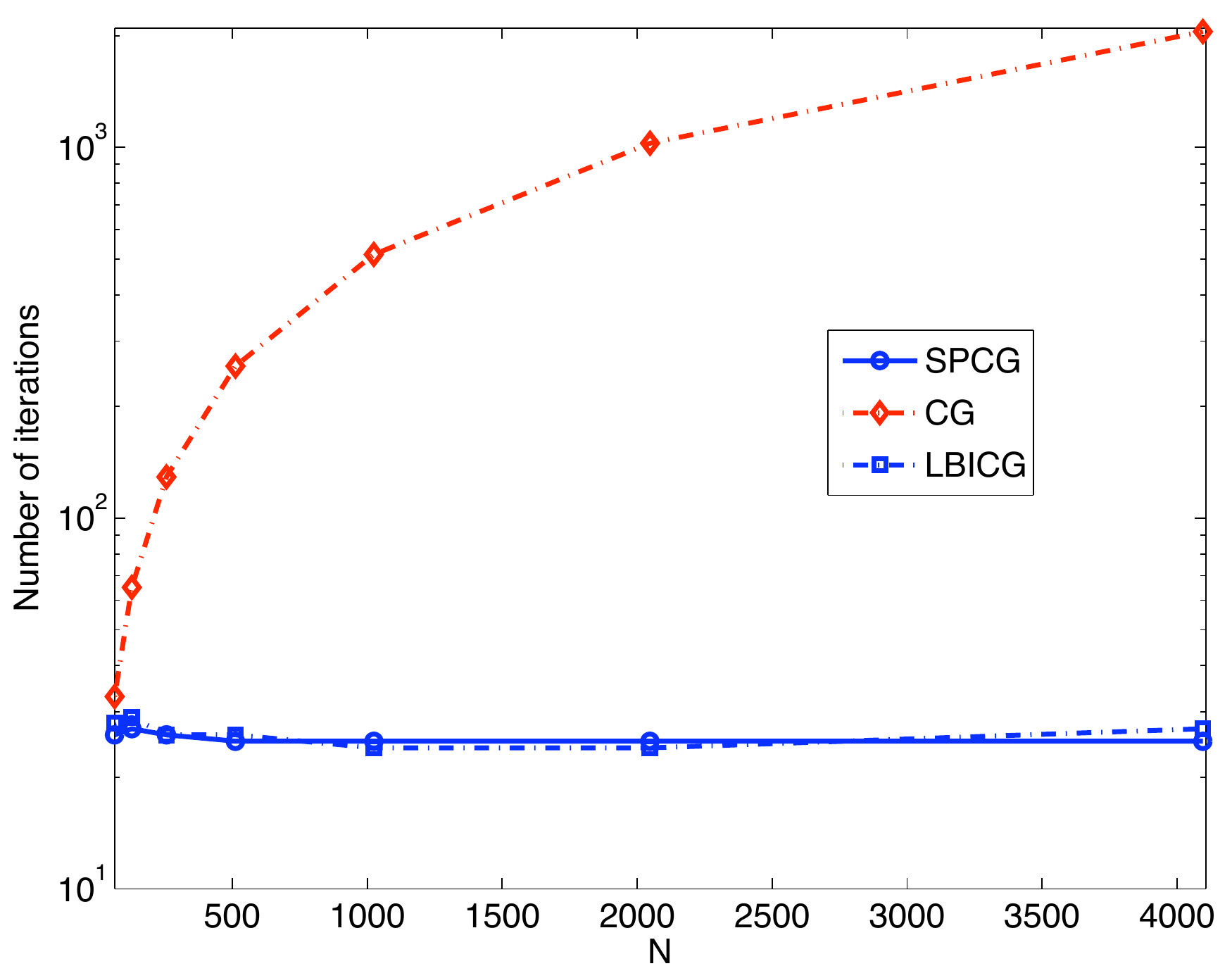}
\end{center}
\caption{Number of iterations  taken by various solvers to converge (we consider 4 windows here) considering boundary conditions  $(1)$ $u(0)=0$ $u(1)=0$ (left figure), $(2)$ $u(0)=u(1)$ (right figure).}
\label{fig:1111_1a}
\end{figure}
\end{example}

We have experimented with the preconditioner considering a simple
constant coefficient BVP in
Example~\ref{f:example_constant_coeff01}. This preconditioner is
actually designed for variable coefficient problems. In the following
example we aim to demonstrate the advantage of
windowing for such problems.

\begin{example}
Consider the periodic BVP
\[
-\frac{d}{dx}\left(a(x)\frac{du(x)}{dx}\right)+b(x) u(x)=f(x), \quad \forall \quad 0 \le x \le 1,
\]
where
$a(x)= 10 - 9.5 \cos(2 \pi x)$, $b(x)=1$  and  
\[
f(x)=
\left\{
\begin{array}{rr}
\exp(x) & \text{when} \quad 0 <x \le 0.25\\
 \exp(-x) & \text{ if} \quad 0.25<x \le 1.
\end{array}
\right.
\]

We use the scheme (\ref{f:discrete_CDS001}) and consider $tol=10^{-10}$.
In Figure~\ref{fig:cxxx_1uxxx}, we compare condition number of the discrete symmetric preconditioned variable coefficient operator in $0\le x \le 1$  by varying the number of windows to cover the domain.
From this experiment we notice that the more subdivisions of the domain (using windows) one considers, the smaller the condition number becomes.
Then we compare the number of iterations taken by the CG method and the SPCG method (considering $8$ windows to cover the periodic $[0, 1]$ domain) and
notice the good behavior of the preconditioned system.
\begin{figure}[ht]
 \begin{center}
\includegraphics[width=0.49\textwidth,height=7.5cm]{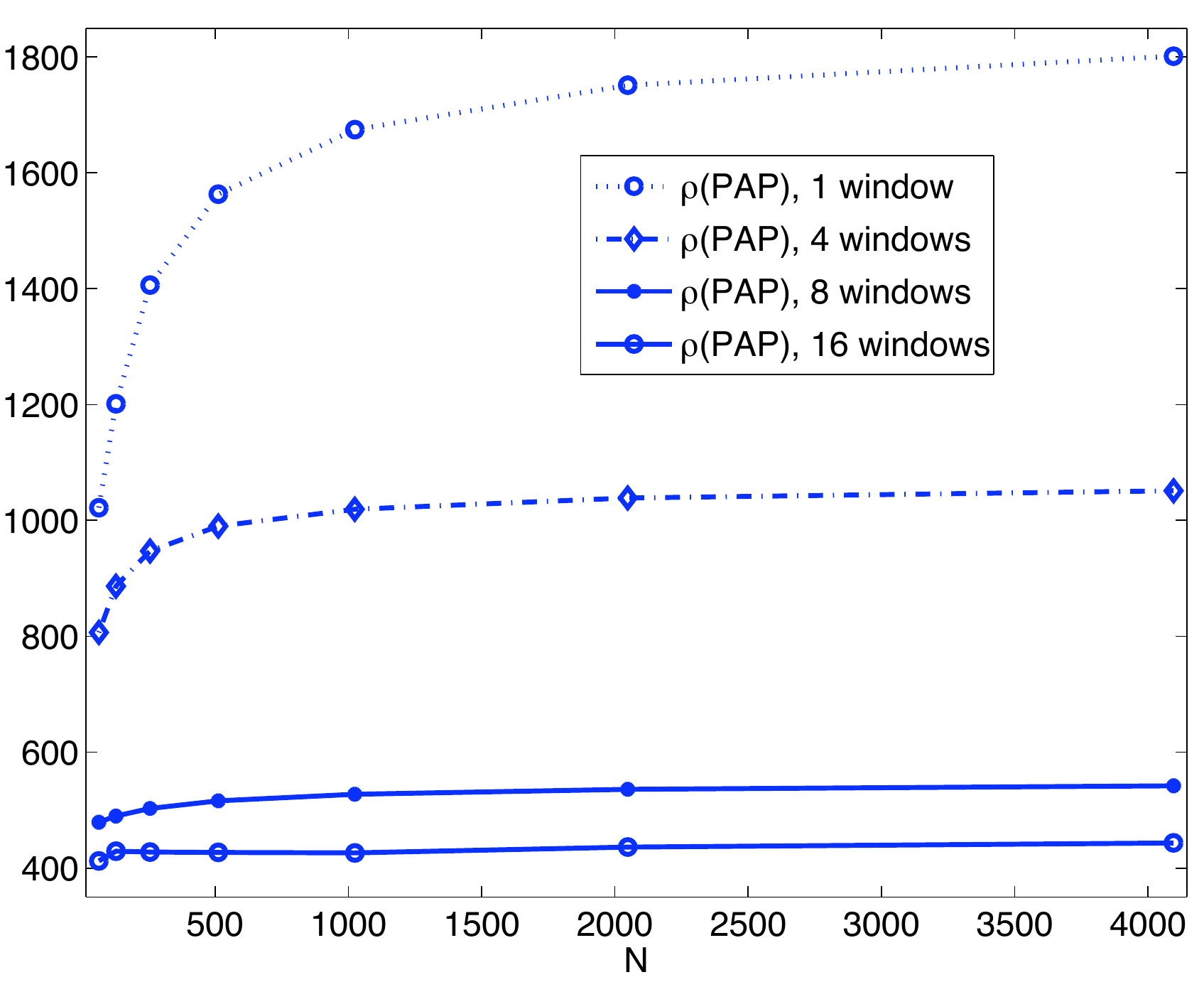}
\includegraphics[width=0.49\textwidth,height=7.5cm]{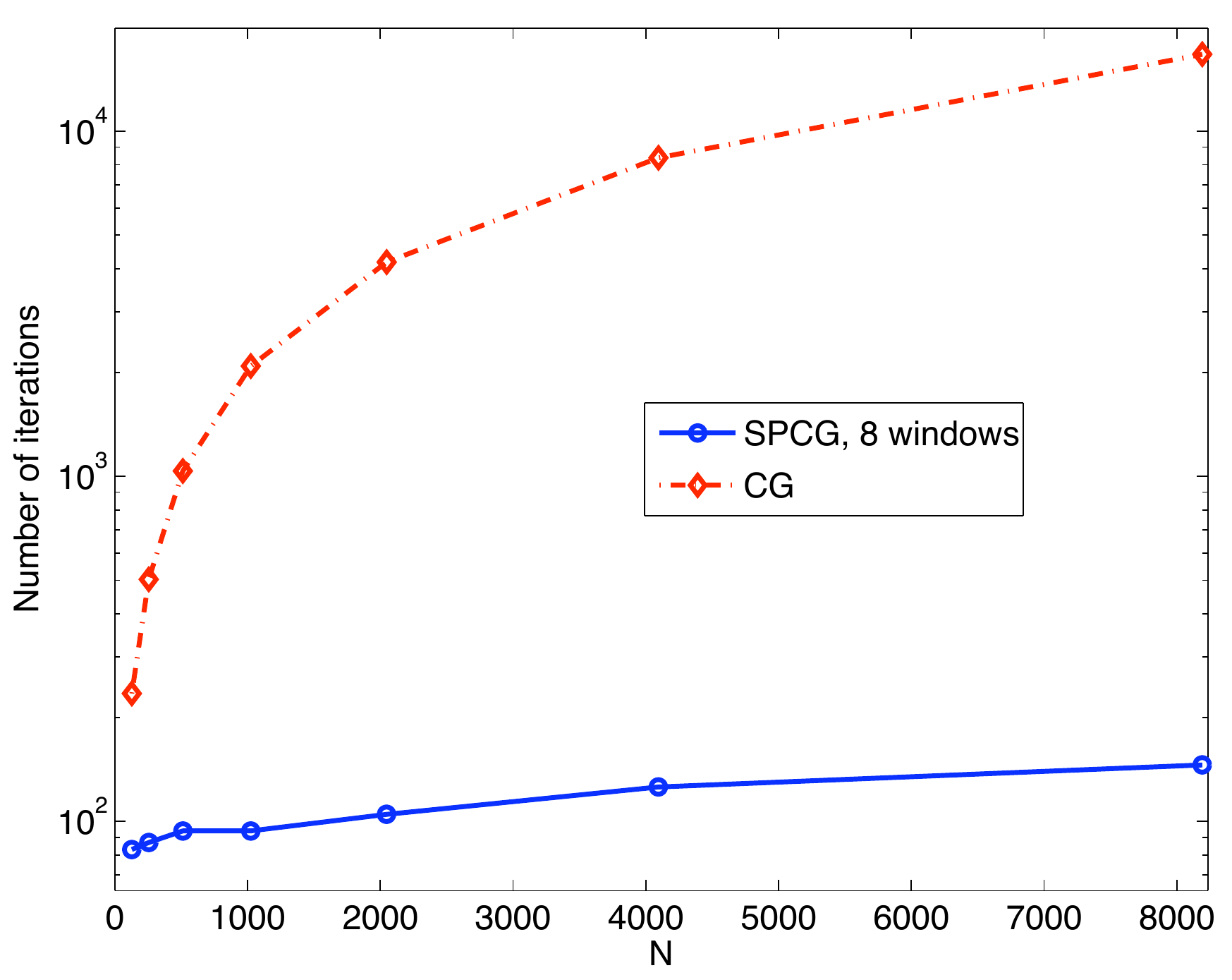}
\end{center}
\caption{We show the condition numbers for various choices of use of  number of window functions.
This left figure shows the efficiency of
windowing by reducing the condition number. The right figure shows the efficiency of windowing by reducing the number of iterations significantly.
}
\label{fig:cxxx_1uxxx}
\end{figure}
\end{example}
%
%
\begin{example}
Consider the BVP
\[
  -\frac{d}{dx}\left(a(x)\frac{du(x)}{dx}\right)+b(x) u(x)=f(x), \quad \forall \quad 0< x< 1,
\]
with boundary conditions
%
 $u(0)=0$ and $u(1)=0,$
where $b(x)=1$ for all $0<x<1$,
$
f(x)
$
as a random function and
\[
a(x)=\left\{
\begin{array}{rr}
\exp(x) & \text{when} \quad 0<x<0.5\\
\exp(-x) & \text{ if} \quad 0.5\le x<1.
\end{array}
\right.
\]
We use the scheme (\ref{f:discrete_CDS001}) and consider $tol=10^{-10}$. In Figure~\ref{fig:cxxxxxxxxxx_1ux} we notice that the condition number keeps growing
but more slowly for large $N$, while the number of iterations appears to
becomes constant or close to constant.
\begin{figure}[ht]
 \begin{center}
\includegraphics[width=0.49\textwidth,height=7.5cm]{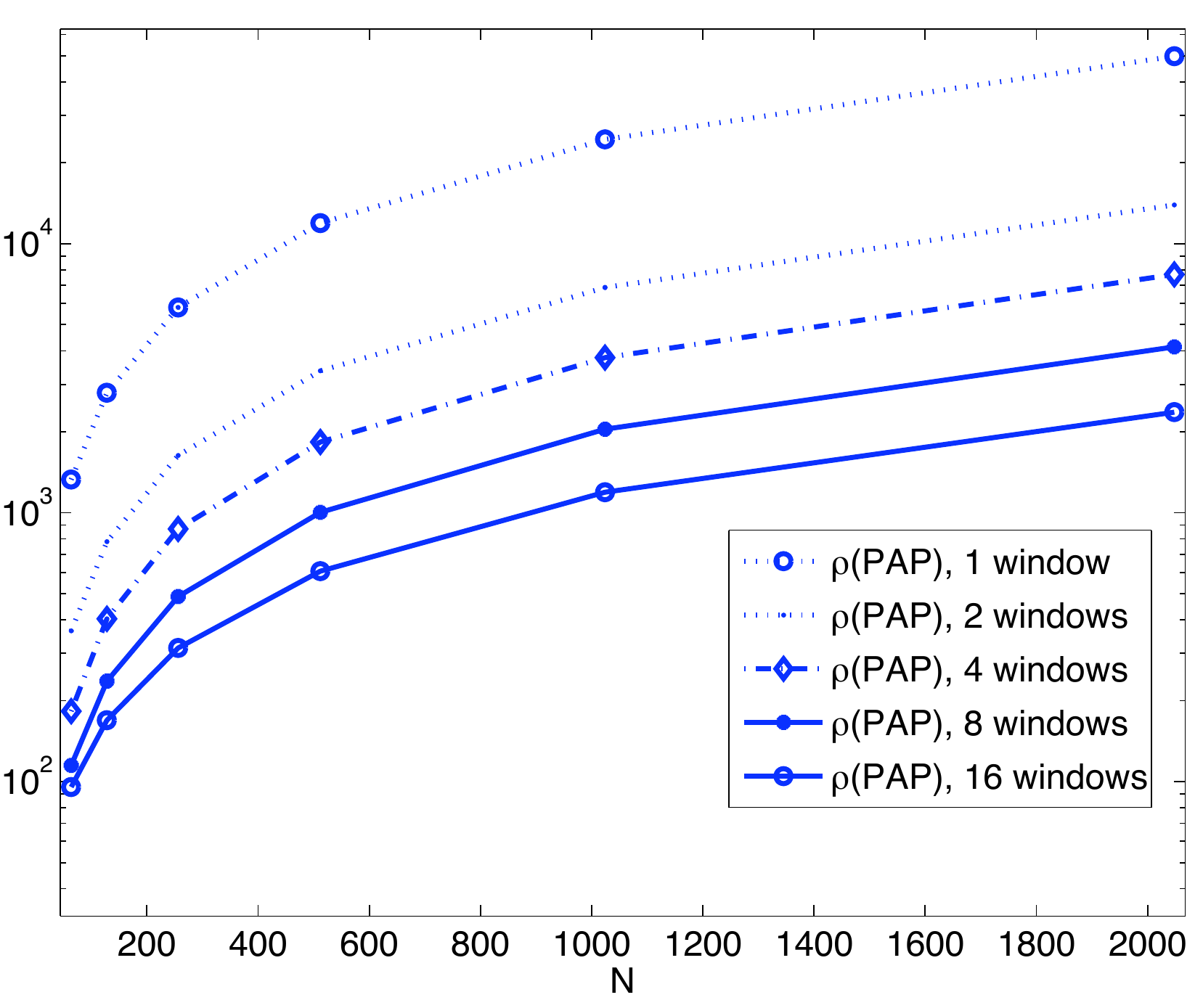}
\includegraphics[width=0.49\textwidth,height=7.5cm]{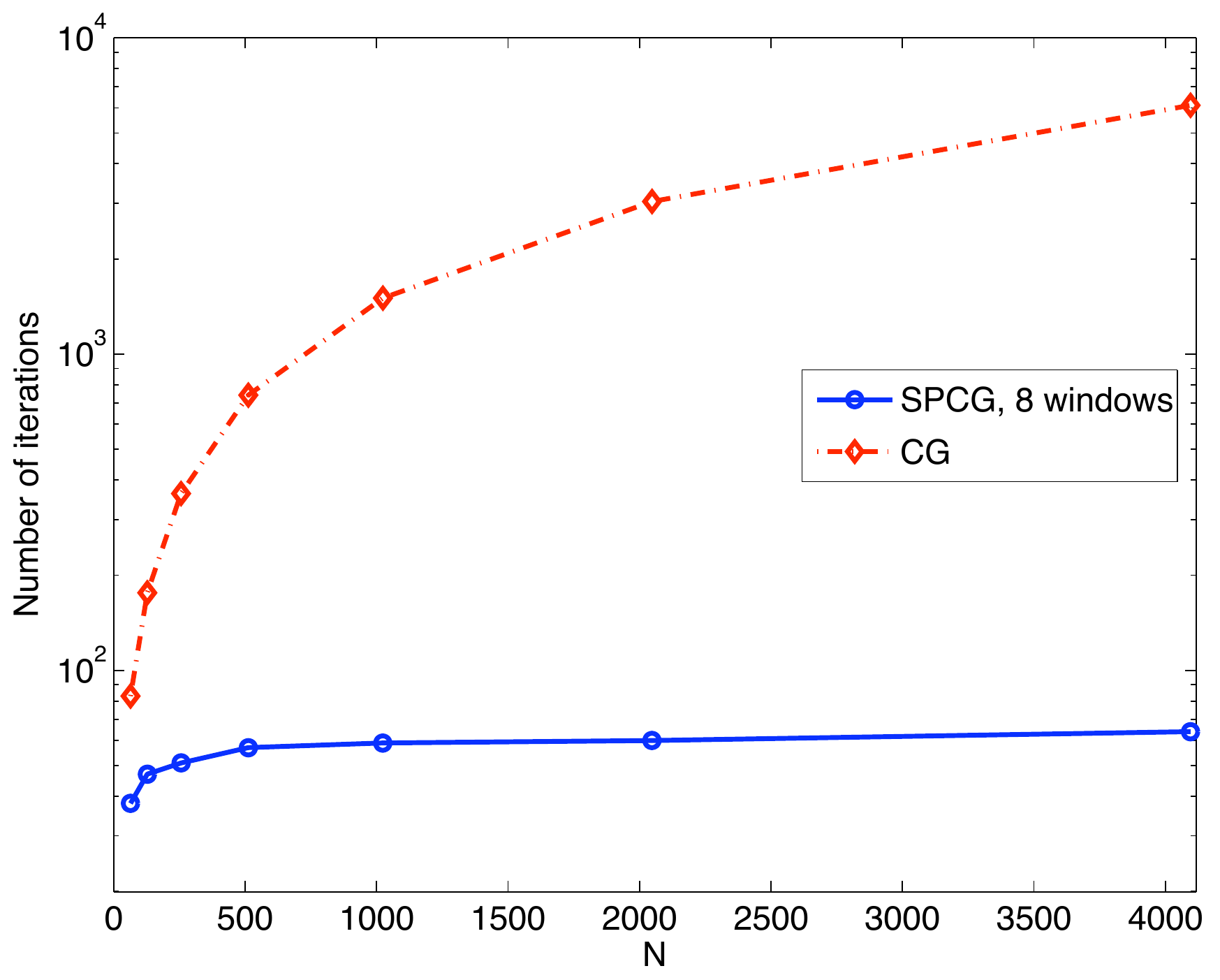} 
\end{center}
\caption{%
The left figure shows condition numbers for different choices of windows, and the right figure shows  the number of iterations taken to converge by the conjugate gradient method and the symmetric preconditioned conjugate gradient method.%
} \label{fig:cxxxxxxxxxx_1ux}
\end{figure}
\end{example}
\subsection{Two dimensional examples}\label{2dexamples}
Here we apply our technique in two dimensions.
We consider one example where the operator is very strong in one direction to demonstrate the advantage  of using the exact symbol. For the other example we consider an operator that contains strongly varying coefficient $a(x, y)$ acting on it to demonstrate further the advantage of windowing.
\begin{example}
Consider
\begin{eqnarray*}
\mathcal{L}u(x, y)&=& -10 \frac{\partial^2 u(x, y)}{\partial x^2}- \frac{1}{10}\frac{\partial^2 u(x, y)}{\partial y^2}=f(x, y),
\end{eqnarray*}
and
$\Omega \subseteq \mathbb{R}^2$ with $u|_{\partial\Omega} = 0$.
Now let $\Omega=(0, 1)^2$ be the unit square and similar to one dimensional case, we define $\nabla y=\nabla x=h=1/N>0$ and
$\hat{x}_{i,j}=(ih, jh)$, $i, j = 0, 1, 2, \cdots, N$.
The results have been compared in Figure~\ref{fig:dd_iter_cpu_FT_DFT01}. Here we notice that the windowed Fourier frame preconditioners designed with the exact symbol performs significantly better.
\begin{figure}[ht!]
\begin{center}
\includegraphics[width=0.49\textwidth,height=7.5cm]{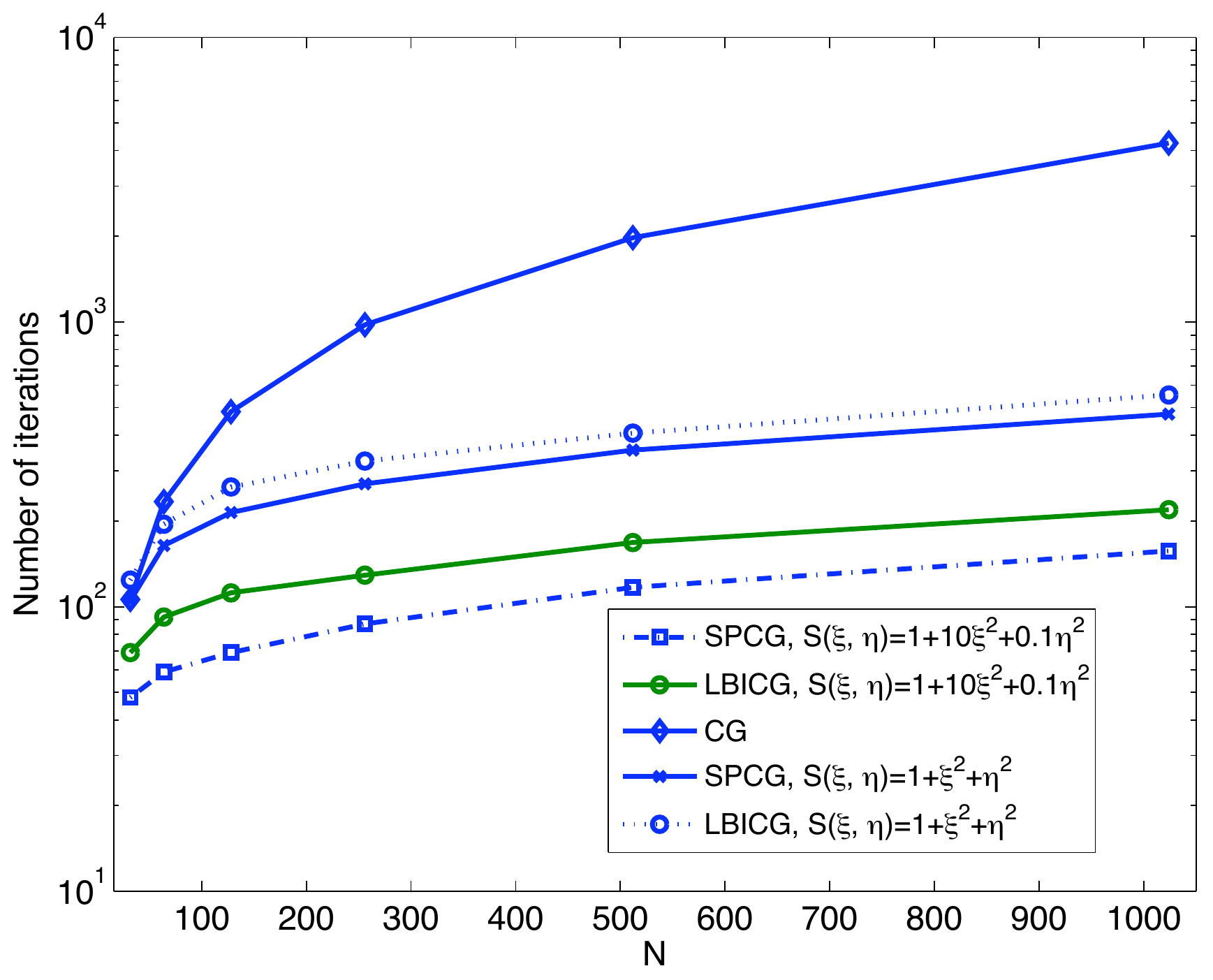}
\includegraphics[width=0.49\textwidth,height=7.5cm]{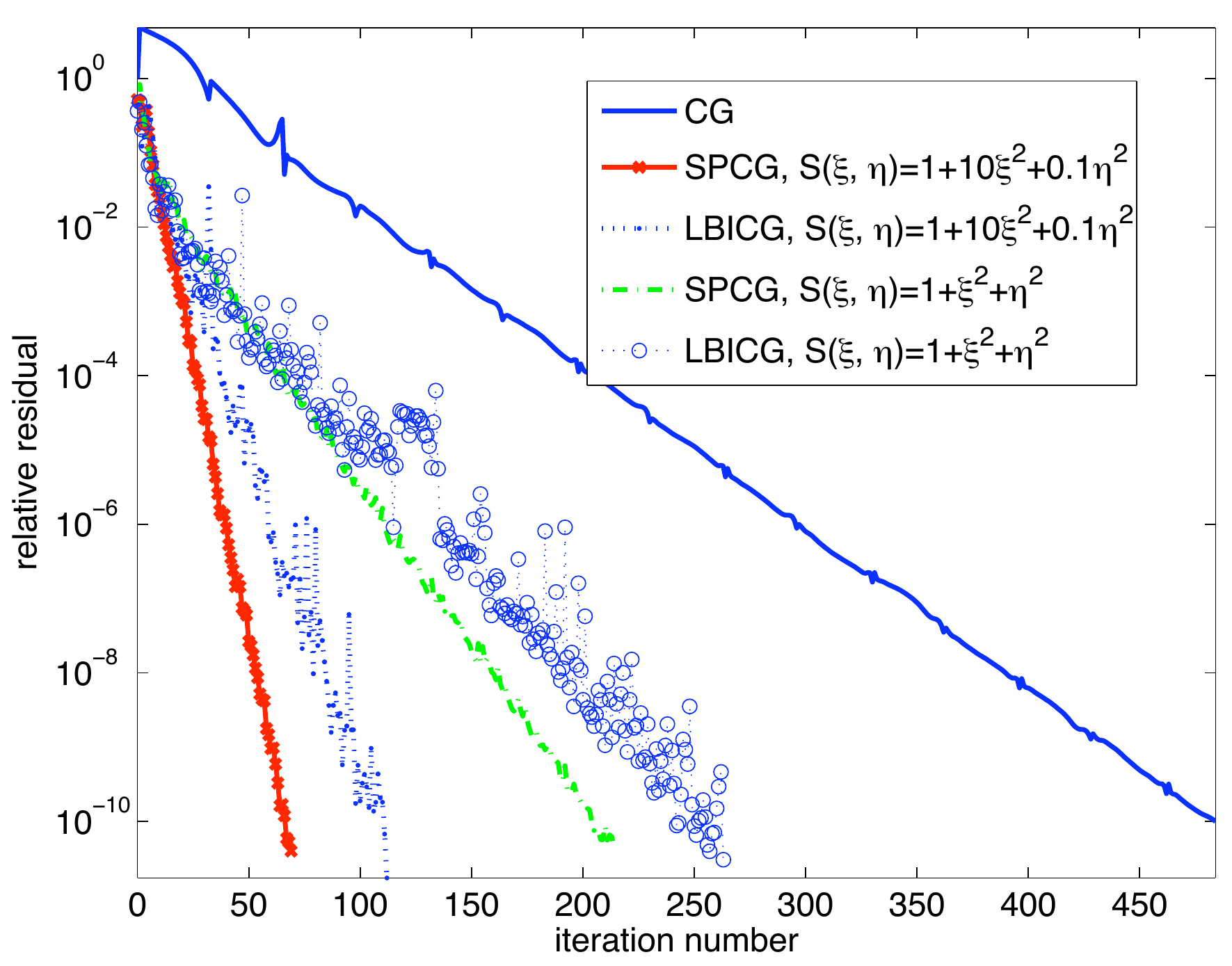}%
\end{center}
\caption{ In the left figure, we compare number of iterations considering the exact symbol $S(\xi, \eta) = 1+ 10 \xi^2 + \frac{1}{10} \eta^2$ and an isotopic symbol $S(\xi, \eta) = 1+  \xi^2 + \eta^2$. Here we use  $N_{l, x}=N_{l, y}=2^{2:7}$ (thus $N=2^{5:10}$ is the number of points on each direction).
In the right figure  we compare relative residuals for each iteration number considering $N_{l, x}=N_{l, y}=2^{4}$.
For this computation we consider $K_x=K_y=4$ windows in each directions, $f(x, y)=e^{-(x+y)}$ and  $tol=10^{-10}$.}
    \label{fig:dd_iter_cpu_FT_DFT01}
    \end{figure}
\end{example}
\begin{example}
Consider
\begin{eqnarray*}
\mathcal{L}u(x, y)&=& -\frac{\partial}{\partial x}\left(a(x)\frac{\partial u(x, y)}{\partial x}\right)- \frac{\partial}{\partial y}\left(b(y)\frac{\partial u(x, y)}{\partial y}\right)=f(x, y),
\end{eqnarray*}
in a periodic domain
$\Omega=[0, 1]^2$,  where $a(x)=10-9.5 \cos(2\pi x)$ and $b(y)=1$.
For this computations we consider symmetric preconditioned solvers only.
In Figure~\ref{fig:singlarvalues_of_iter_cpu_FT_DFT01aa}, we compare the Fourier preconditioned conjugate gradient method  (one Fourier transform only, considering no windowing) and the windowed Fourier frame preconditioned conjugate gradient method ($4$ windows in each direction). Here we observe that the preconditioned solver based on the windowed Fourier frame performs better than the Fourier preconditioned solver.
\begin{figure}[ht!]
\begin{center}
\includegraphics[width=0.8\textwidth,height=8.5cm]{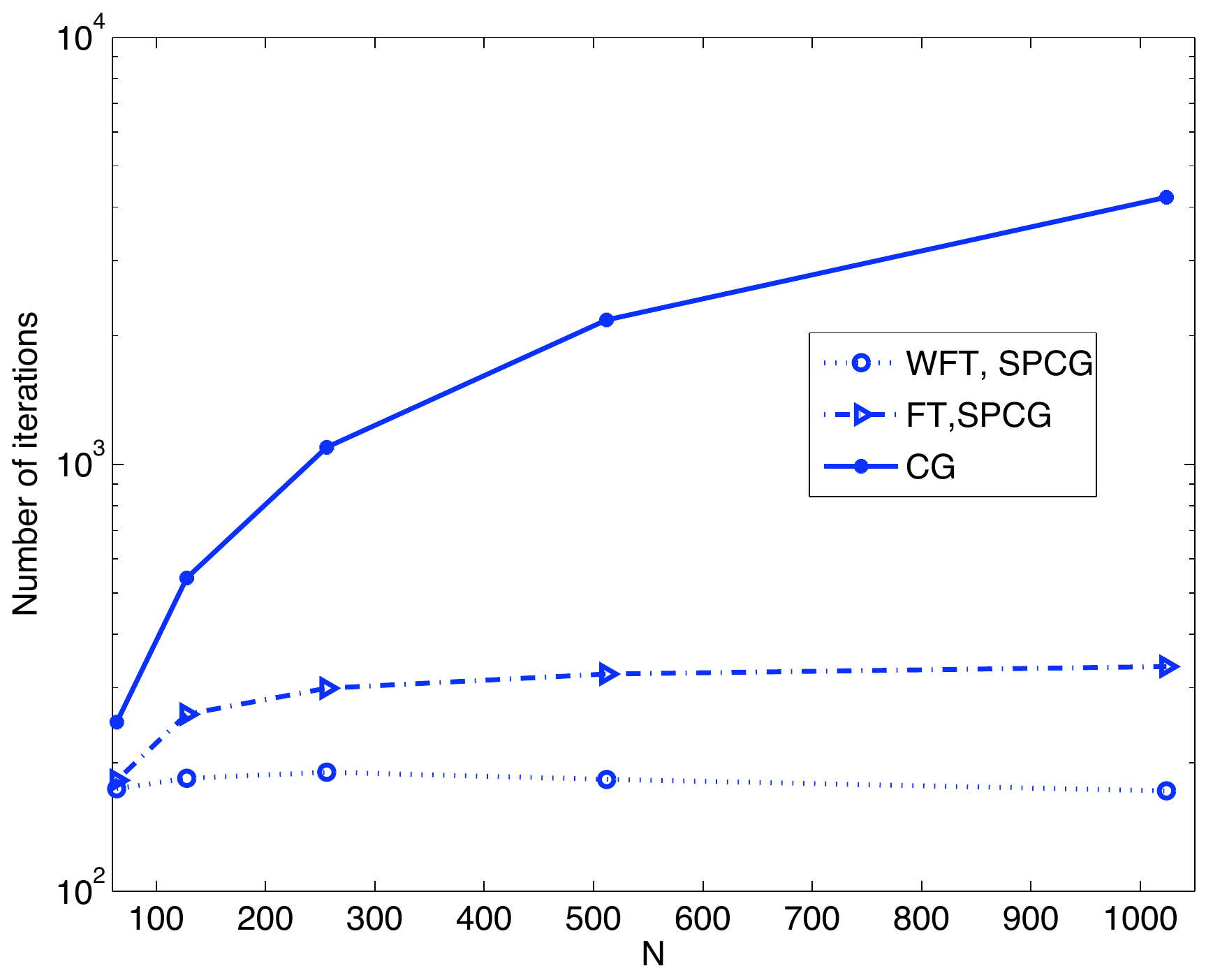}
\end{center}
\caption{ We compare number of iterations considering symmetric preconditioner with $4$ windows and without windows. We consider the symbol $S(\xi, \eta) = 1+ a(x) \xi^2 + b(y) \eta^2$. Here we use $K_x=K_y=4$, $N_{l, x}=N_{l, y}=2^{3:7}$ (thus $N=2^{6:10}$ is the number of points on each directions). For this computation we consider $f(x, y)=e^{-(x+y)}$ and  $tol=10^{-10}$.}
    \label{fig:singlarvalues_of_iter_cpu_FT_DFT01aa}
    \end{figure}
\end{example}

\section{Conclusions}\label{conclusions001}
We study windowed Fourier frames (WFF)  focusing on elliptic boundary value
problems and  present new preconditioners based on the symbol of the operator
and WFFs.
From this study we conclude that the preconditioners based on WFFs work
well for elliptic problems, most of the singular values clustered to a
constant approximately for both preconditioned operators (SPCG and
LBICG). For periodic domains, the condition number is bounded. However,
when a non-periodic domain is considered, there are a few very large
and a few very small singular values, which cause the condition number
of the symmetrically preconditioned system to grow $\mathcal{O}(N)$
whereas  the condition number of left preconditioned system grows close
to  $\mathcal{O}(N^2)$. This affects the convergence of the CG method
relatively little, as it concerns only a few singular values, but
a better definition of such a preconditioner on a bounded
domain is clearly a question for further research.

As expected both the SPCG and the LBICG take very few iterations to
converge compared to the unpreconditioned CG method. For
multidimensional PDEs, if the problem is strongly dominant in one
direction then the use preconditioners based on the exact symbols is
recommended. We notice that the use of reasonably many windows (while
defining the preconditioner) has an advantage for PDEs with strongly
varying coefficients.

\appendix

\section{Window function construction}\label{f:appendix}
The window function plays an important role in the preconditioner,
therefore we discuss its construction.
A priori, the objective is to select a window function that satisfies
the properties described in section~\ref{sec:precon_construct}
and decays rapidly in the frequency domain.
It is well known that smooth functions have fast decay for large
frequencies  \cite{Mallat2009}.
So we want to form window functions that have at least some vanishing
derivatives at both ends of the support.
In \cite{Mallat2009} there are some general rules for constructing
window functions, as well as some specific examples, we make use of this
and also compare some of the example with a construction of our own.
We present a schematic window function and two translations in
Figure~\ref{f:profilewindowfunctionsine01aaa}.
\begin{figure}[here]
     \begin{center}
     \includegraphics[width=0.85\textwidth,height=8cm]{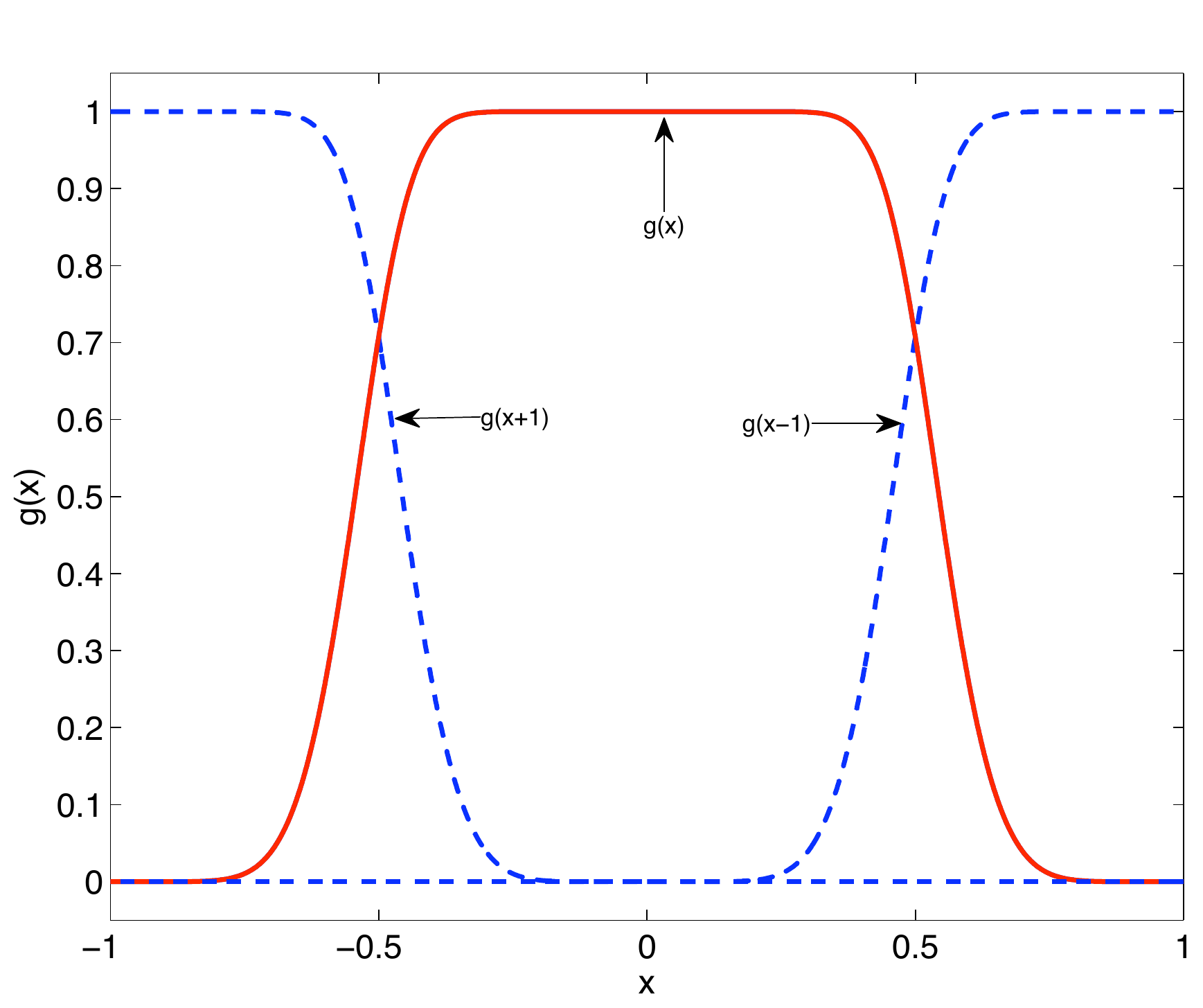}
     \end{center}
\caption{The figure shows  window function $g(x)$ has support $[-1, 1]$,
the left and the right half graphs are the translations of the function.}
\label{f:profilewindowfunctionsine01aaa}
\end{figure}

To design $g(x)$, we consider a monotone increasing function $h(x)$ ($C^k$ or $C^{\infty}$) such that
\begin{equation}\label{f:profilefunction01aa}
 h(x)=\left\{
\begin{array}{ll}
                      0& \text{if} \quad x \le -1,\\
                      1 &  \text{if} \quad x \ge 0,
               \end{array}
\right.
\end{equation}
and satisfies 
\begin{equation} \label{eq:sum_hsquared}
 h^2(-1/2+y)+ h^2\left(-1/2-y \right)=1, \qquad -1 \le x \le 0 .
\end{equation}
Then to form a function $g$ with support on $[-v_0,v_0]$
we take
\[
  g = \left\{ \begin{array}{ll}
                h(x/v_0)  & \text{for $x <0$}\\
                h(-x/v_0) & \text{for $x \ge 0$.}
              \end{array} \right.
\]
then (\ref{eq:sum_hsquared}) ensures that (\ref{findbound0a0:f}) holds.
Of course a variation is possible. For example we can take a parameter
$1/2 < a < 1$, and, let $g(x)=0$ for $|x| > a$, $g(x)=1$ for $|x| < 1-a$,
and $g$ described by a dilated version of $h$ for $1-a < |x| < a$.

In Figure~\ref{f:profilewindowfunctionsine01}, we show some examples of
monotone increasing functions. First we have a class of functions,
with different $c$, given by
\[
h_1(x)  = \left\{
        \begin{array}{ll}
                      0& \text{if} \quad x \le 0,\\
                      \sin\left(\frac{\pi}{2}\frac{e^{-\frac{c}{x}}}{e^{-\frac{c}{x}}+e^{-\frac{c}{1-x}}} \right)  & \text{if}\quad  0 < x < 1,\\
                      1 &  \text{if} \quad x \ge 1,
               \end{array}
\right.
\]
(we need to translate this backward by 1 to have a function $h$ satisfying
the description above). This is constructed
using the standard profile $h_a(x) =
\frac{e^{-\frac{1}{x}}}{e^{-\frac{1}{x}}+e^{-\frac{1}{1-x}}}$, that goes smoothly
from 0 to 1 on $[0,1]$ and satisfies $h_a(x) + h_a(1-x) =1$. Taking the
$\sin(\frac{\pi}{2} h_a(x))$ instead of $h_a$ transforms the property
$h_a(x) + h_a(1-x) =1$ into the similar property (\ref{eq:sum_hsquared})
for the squares.

Two functions from \cite{Mallat2009} are
\[
  h_2(x)  = \left\{
               \begin{array}{ll}
                      0& \text{if}\quad  x \le -1,\\
                      \sin\left(\frac{\pi}{2} \sin^2\left(\frac{\pi}{2} (1+x)\right)\right)  & \text{if} \quad -1 < x < 0,\\
                      1 &  \text{if} \quad x \ge 0.
               \end{array}
\right.
\]
or
\[
h_3(x)=
\left\{
               \begin{array}{ll}
                      0& \text{if}\quad  x \le -1,\\
                      \cos\left(\frac{\pi}{2} \sin^2\left(\frac{\pi}{2} x\right)\right)  & \text{if} \quad -1 < x < 0,\\
                      1 &  \text{if} \quad x \ge 0.
               \end{array}
\right.
\]
We also find another profile function following \cite{Mallat2009}. Here we consider a monotone increasing function (denote it as mallat on the legend of the figures) in $[-1, 0]$
\[
h_4(x)=\left\{
  \begin{array}{ll}
   0 & \text{if} \quad x\le -1\\
\sin\left(\frac{\pi}{4}\left(1+\sin\left(\frac{\pi}{2} \sin\left(\frac{\pi}{2} \sin\left(\frac{\pi (2x+1)}{2}\right)\right)\right)\right)\right) &  \text{if} \quad -1 < x< 0\\
1 & \text{if} \quad x\ge 0.
\end{array}
\right.
\]
It is to note that if we consider  two sine functions or four sine functions inside $\sin(\pi/4(1+f(x))$, then the decay in the frequency domain is not as fast as of considering three sine functions defined by $h_4(x)$, so we decide to use $h_4(x)$ as a representative for this class of profile functions.
%
\begin{figure}[t]
     \begin{center}
     \includegraphics[width=0.9\textwidth,height=8.5cm]{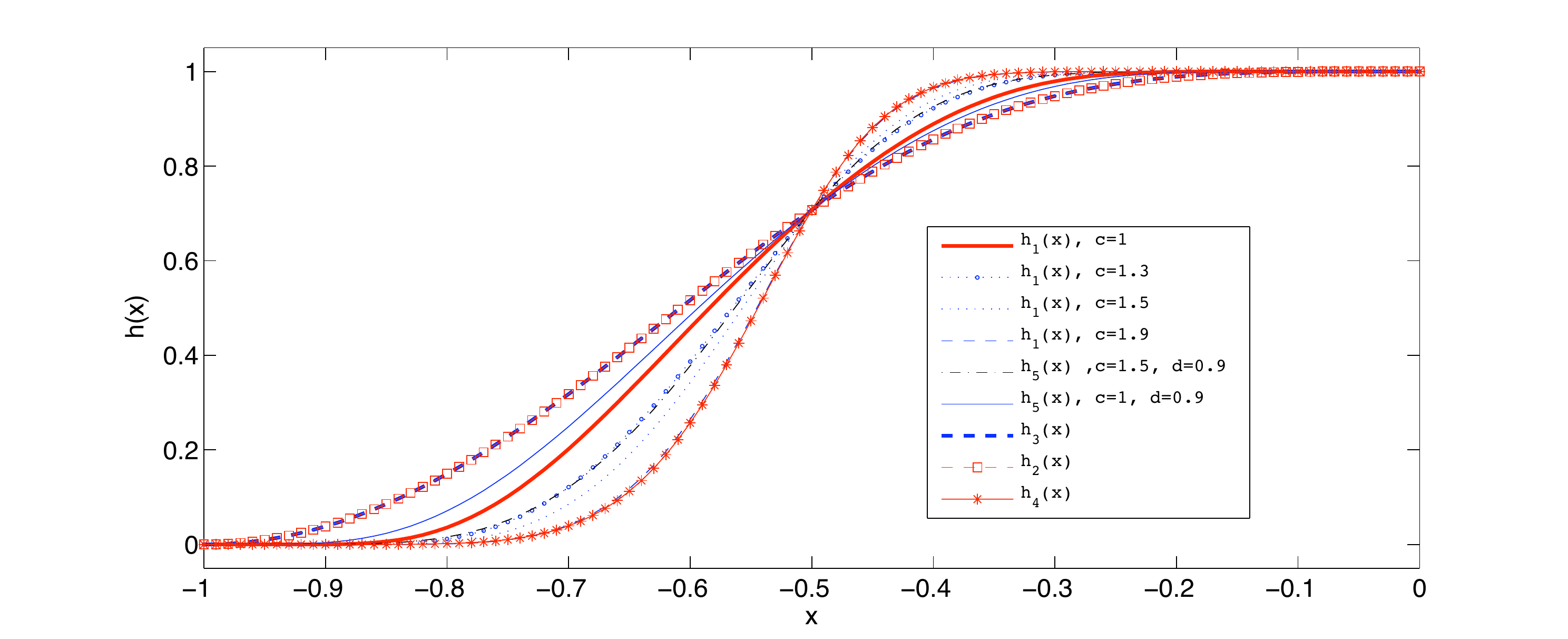}
     \end{center}
     \caption{The figure shows  several monotone increasing  functions $h(x)$ in $[-1, 0]$. 
It is, in fact, showing left half of the window functions considered in this study.
}
     \label{f:profilewindowfunctionsine01}
    \end{figure}

Here to find a good window function which has fast decay in
frequency domain, we try several window functions $g(x)$, where
$g(x)$ is defined from $g$ as given above.
It can be observed that the windows with profile function $h_1(x)$,
and the window with profile function $h_4(x)$ are  smoother than
the other windows and have faster decay all over the frequency domain.

We also define a dilated variant of the profile
$h_1(x)$, effectively throwing out some of the zeros and ones
on the outside of $[0,1]$.
We define the dilated profile function by
\[
  h_5(x)= h_1(1/2+d(x-1/2)),
\qquad \text{for some suitable }\quad 0< d < 1.
\]
\begin{figure}[t]
    \begin{center}
     \includegraphics[width=0.98\textwidth,height=9.5cm]{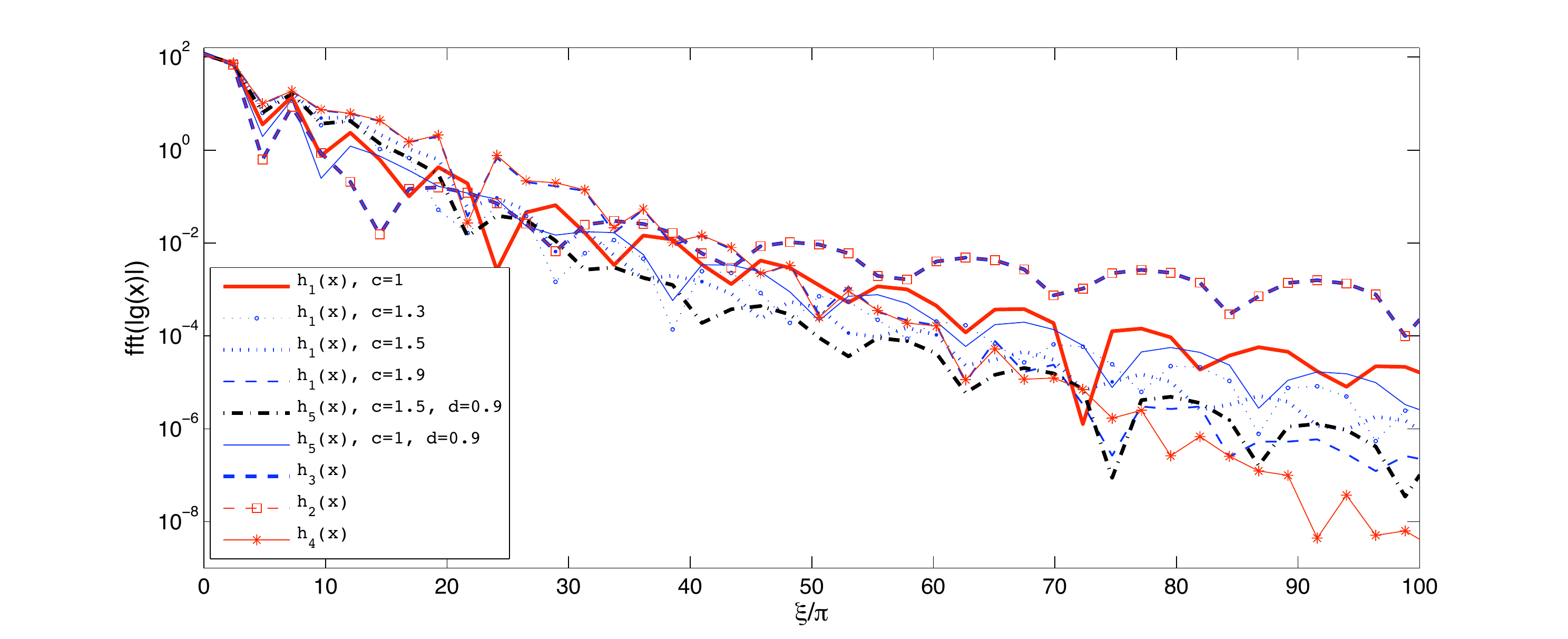}
     \end{center}
\caption{
The figure shows  several $|\hat g|$ in a logarithmic scale.  We consider the window function
with profile function $h_1(x)$
with $c=[1, 1.3, 1.5, 1.7, 1.9]$,
$h_5(x)$ with  $c=1, 1.5$,
and  $d=0.9$,
as well as
considering $h_2(x)$, $h_3(x)$ and $h_4(x)$.
%
}
\label{f:com_alllogplots0a}
 \end{figure}

In Figure~\ref{f:com_alllogplots0a}, we compare several window functions
to find out which one has faster decay in the frequency domain.
Analyzing the graphs, we notice that
the window functions defined from monotone increasing functions  $h_2(x)$ and $h_3(x)$ have slow decay in the frequency domain. All other windows have quite similar decay in the frequency domain.
We observe that the window function formed by the monotone increasing function $h_4(x)$ and the stretched variant with $d=0.9$ and $c=1.5$ behave very
well when $\xi/\pi$ is in the range of 20 to 60, a region that one might
still expect to be relevant, with relative
size of the Fourier coefficients between $10^{-4}$ to $10^{-8}$.

\end{document}